\numberwithin{equation}{section}
\newtheorem{theorem}{Theorem}[section]
\newtheorem{lemma}[theorem]{Lemma}
\newtheorem{corollary}[theorem]{Corollary}
\theoremstyle{definition}
\newtheorem{definition}[theorem]{Definition}
\newtheorem{remark}[theorem]{Remark}
\newcommand{\ZZ}{ \ensuremath{\mathbb{Z}}}
\newcommand{\init}{\ensuremath{\mathrm{in}}\hspace{1pt}}
\newcommand{\rank}{\ensuremath{\mathrm{rank}}\hspace{1pt}}
\def\cocoa{{\hbox{\rm C\kern-.13em o\kern-.07em C\kern-.13em o\kern-.15em A}}}
\newcommand{\A}{\mathcal{A}}
\newcommand{\B}{\mathcal{B}}
\newcommand{\ee}{\mathbf{e}}
\newcommand{\width}{\mathrm{width}}
\newcommand{\hvec}{\mathbf{h}}
\begin{document}

\title{$h$-vectors of simplicial cell balls}

\author{Satoshi Murai}
\address{
Satoshi Murai,
Department of Mathematical Science,
Faculty of Science,
Yamaguchi University,
1677-1 Yoshida, Yamaguchi 753-8512, Japan.
}




\begin{abstract}
A simplicial cell ball is a simplicial poset whose geometric realization is homeomorphic to a ball.
Recently, Samuel Kolins gave a series of necessary conditions and sufficient conditions on $h$-vectors of simplicial cell balls,
and characterized them up to dimension $6$.
In this paper,
we extend Kolins' results.
We characterize all possible $h$-vectors of simplicial cell balls
in arbitrary dimension.
\end{abstract}

\maketitle

\section{Introduction}

A \textit{simplicial poset} is a finite poset $P$ with a minimal element $\hat 0$
such that every interval $[\hat 0, \sigma]$ for $\sigma \in P$ is a Boolean algebra.
A typical example of a simplicial poset is the face poset of a simplicial complex,
but not all simplicial posets come from simplicial complexes.
Simplicial posets are CW-posets.
Thus for any simplicial poset $P$ there is a regular CW-complex $\Gamma(P)$ whose face poset is isomorphic to $P$ (see \cite[pp.\ 8--9]{Bj}).
A simplicial poset $P$ (or a CW-complex $\Gamma(P)$)
is called
a \textit{simplicial cell $d$-ball (respectively $d$-sphere)}
if $\Gamma(P)$ is homeomorphic to a $d$-ball (respectively $d$-sphere).
Kolins \cite{Ko} studied $h$-vectors of simplicial cell balls
and gave a characterization of $h$-vectors of simplicial cell balls up to dimension $6$.
In this paper,
we extend the results of Kolins
and give a complete characterization of $h$-vectors of simplicial cell balls.

Let $P$ be a simplicial poset.
We say that an element $\sigma \in P$ has \textit{rank $i$}, denoted $\rank \sigma =i$,
if $[\hat 0,\sigma]$ is a Boolean algebra of rank $i$.
The \textit{dimension} of $P$ is 
$$\dim P= \max \{\rank \sigma : \sigma \in P\}-1.$$
Let $d=\dim P +1$ and let $f_i=f_i(P)$ be the number of elements $\sigma \in P$ having rank $i+1$ for $i=-1,0,\dots,d-1$.
Thus $f_i$ is the number of $i$-dimensional cells of $\Gamma (P)$.
The vector $f(P)=(f_{-1},f_0,\dots,f_{d-1})$ is called the \textit{$f$-vector (face vector) of $P$}.
We define the \textit{$h$-vector $h(P)=(h_0(P),h_1(P),\dots,h_d(P)) \in \ZZ^{d+1}$ of $P$} by the relation
$$\sum_{i=0}^d f_{i-1}  (1-t)^{d-i} = \sum_{i=0}^d h_i(P) t^{d-i}.$$
Then knowing $f(P)$ is equivalent to knowing $h(P)$.

On face vectors of simplicial cell spheres,
Stanley \cite{St} and Masuda \cite{Ma}
proved the following result,
which characterizes all possible $h$-vectors of simplicial cell spheres.

\begin{theorem}[Stanley, Masuda]
\label{1.1}
Let $\hvec =(h_0,h_1,\dots,h_d) \in \ZZ^{d+1}$.
Then $\hvec$ is the $h$-vector of a simplicial cell $(d-1)$-sphere if and only if
it satisfies the following conditions:
\begin{itemize}
\item[(1)] $h_0=h_d=1$ and $h_i=h_{d-i}$ for $i=1,2,\dots,d-1$.
\item[(2)] $h_i \geq 0$ for $i=0,1,\dots,d$.
\item[(3)] If $h_n=0$ for some $1 \leq n \leq d-1$ then $\sum_{k=0}^d h_k$ is even.
\end{itemize} 
\end{theorem}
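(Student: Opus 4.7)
My plan is to attack necessity via Stanley's face ring of a simplicial poset and sufficiency by an explicit construction. To a simplicial poset $P$ of dimension $d-1$, Stanley associates a graded $k$-algebra $A_P$ of Krull dimension $d$ whose combinatorial $h$-vector agrees with the Hilbert function of the Artinian reduction $A_P/\theta A_P$ by a linear system of parameters $\theta$. For a simplicial cell $(d-1)$-sphere, $A_P$ is known to be Gorenstein, and this is the single structural fact that drives the necessity portion of the proof.

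Conditions (1) and (2) then fall out of general commutative algebra. Cohen-Macaulayness of $A_P$ (a consequence of the Gorenstein property) yields non-negativity of the Hilbert function of $A_P/\theta A_P$, giving (2). The Gorenstein property says that the socle of $A_P/\theta A_P$ is one-dimensional and concentrated in top degree $d$; combined with $h_0=1$, this forces $h_d=1$ and the Poincaré-duality symmetry $h_i = h_{d-i}$, giving (1).

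Condition (3) is the delicate point, and is what genuinely distinguishes simplicial cell spheres from simplicial spheres (where no parity constraint is needed). Here I would follow Masuda's approach: unlike the Stanley-Reisner ring of a simplicial complex, the face ring of a simplicial poset can have atoms $a_\sigma$ whose squares do not vanish, reflecting the existence of multiple cells sharing the same boundary. The plan is to use this extra structure to produce, whenever some interior $h_n$ vanishes, a fixed-point-free involution on a homogeneous $k$-basis of $A_P/\theta A_P$, so that $\sum h_i = \dim_k(A_P/\theta A_P)$ is forced to be even. I expect the construction of this involution, and the verification that an interior zero in the $h$-vector actually produces one, to be the hardest step of the proof.

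For sufficiency, I would realize any admissible $h$-vector using a short list of construction operations: the boundaries of the $d$-simplex and of the cross-polytope as base cases, together with joins, suspensions, connected sums, and a doubling operation that glues two $(d-1)$-cells along a common boundary to create a simplicial cell complex with two parallel top-dimensional cells sharing the same facets. The doubling operation is what makes the simplicial-cell-sphere setting more flexible than the simplicial-complex setting: it inserts interior zeros into the $h$-vector at the cost of doubling certain entries, which preserves the parity of $\sum h_i$. Given a target $\hvec$ satisfying (1)--(3), one assembles it as an additive combination of $h$-vectors of these building blocks by induction on $d$ and on $\sum h_i$. The parity condition (3) is exactly what makes the bookkeeping consistent whenever interior zeros are required; the bookkeeping itself is routine but lengthy, and I would organize it by first handling the case where $\hvec$ has no interior zeros (the shellable case, which can be realized by a standard shelled simplicial poset) and then correcting by doublings to produce prescribed zeros.
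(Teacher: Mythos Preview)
The paper does not actually prove Theorem~\ref{1.1}; it is quoted as the Stanley--Masuda theorem, with references to \cite{St} and \cite{Ma} (and to \cite{MR} for a simplified proof of the hard direction). The only piece of the argument the paper records explicitly is Masuda's key technical fact, stated later as Lemma~\ref{2-2}: if every product $x_{v_1}\cdots x_{v_d}$ of $d$ distinct vertex variables vanishes in $A/(I_P+(\Theta))$, then the number of facets of the simplicial cell sphere $P$ is even.

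Your treatment of (1) and (2) via the Gorenstein/Cohen--Macaulay property of Stanley's face ring is exactly the standard argument and matches what the paper invokes. Your sufficiency sketch (building blocks plus a ``doubling'' move identifying two simplices along their boundaries) is in the right spirit and close to Stanley's construction.

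The gap is in your plan for (3). You say you will ``follow Masuda's approach'' but then describe constructing a fixed-point-free involution on a $k$-basis of $A_P/\theta A_P$. That is not what Masuda does, and it is not clear such an involution exists or how an interior zero of $\hvec$ would produce one. Masuda's actual mechanism, as recorded in Lemma~\ref{2-2}, is algebraic and quite different: one shows that if $h_n=0$ then $(A/(I_P+(\Theta)))_n=0$, so any monomial $x_{v_1}\cdots x_{v_d}$ in distinct vertex variables, which factors through degree $n$, already lies in $I_P+(\Theta)$. The substantive lemma then identifies the image of such a monomial in the one-dimensional socle with (a unit times) the number of facets covering a fixed vertex set, and a parity/orientation argument over all facets forces $f_{d-1}=\sum h_k$ to be even. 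Your involution idea does not capture this, and as you yourself flag, you have no construction for it; replacing it with the vanishing-of-vertex-products route (Lemma~\ref{2-2}) would turn your outline into an actual proof.
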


For a vector $\hvec=(h_0,h_1,\dots,h_d) \in \ZZ^{d+1}$,
we define $\partial \hvec=(\partial h_0,\partial h_1,\dots,\partial h_{d-1}) \in \ZZ^d$ by
$$\partial h_i = (h_0+h_1+\cdots +h_i)-(h_d+h_{d-1}+\cdots +h_{d-i})$$
for $i=0,1,\dots,d-1$.
It is known that if $\hvec$ is the $h$-vector of a simplicial cell $(d-1)$-ball $P$,
then $\partial \hvec$ is the $h$-vector of the boundary sphere of $P$.
In this paper,
we prove the next result,
which characterizes all possible $h$-vectors of simplicial cell balls.

\begin{theorem}
\label{1.2}
Let $\hvec=(h_0,h_1,\dots,h_d) \in \ZZ^{d+1}$.
Then $\hvec$ is the $h$-vector of a simplicial cell $(d-1)$-ball if and only if
it satisfies the following conditions:
\begin{itemize}
\item[(1)] $h_0=1,$ $h_d=0$ and $h_k \geq 0$ for $k=1,2,\dots,d-1$.
\item[(2)] $\partial h_k \geq 0$ for $k=0,1,\dots,d-1$.
\item[(3)] If $d$ is odd and $\partial h_n=0$ for some $1 \leq n \leq d-2$ then $\sum_{k=0}^d h_k$ is even.
\item[(4)] If $\partial h_n=0$ for some $1 \leq n \leq d-2$ then
$$h_k + h_{k-1} + \cdots +h_{k-n+1} \geq \partial h_k
\ \ \mbox{ for } k=n,n+1,\dots,d-1.$$
\item[(5)]
If $\partial h_i=0$ and $h_j=0$ for some positive integers $i$ and $j$ with $i+j \leq d$ then $\sum_{k=0}^d h_k$ is even.
\item[(6)]
Suppose $\partial h_n = 0$ for some $1 \leq n < \frac d 2$.
If $(h_\ell + h_{\ell-1} + \cdots + h_{\ell-n+1}) -\partial h_\ell < n$ for some $ n \leq \ell \leq d-n$
then $\sum_{k=0}^d h_k$ is even. 
\item[(7)]
Suppose $\partial h_i=0$ and $h_j=0$ for some integers $i$ and $j$ with $0< i < \frac d 2$ and $d-i  < j <d$.
If $\partial h_\ell \leq \ell$ for some $\ell \leq d-j$ then $\sum_{k=0}^d h_k$ is even.
\end{itemize}
\end{theorem}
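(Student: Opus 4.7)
The plan is to prove the necessity and sufficiency directions separately. For necessity, conditions (1) and (2) are essentially standard: $h_0 = f_{-1} = 1$ from the definitions; Cohen--Macaulayness of the face ring of a simplicial cell ball yields $h_k(P) \geq 0$; $h_d(P) = 0$ since a ball has vanishing reduced Euler characteristic; and (2) follows from the identity $\partial \hvec = h(\partial P)$ (noted in the introduction) together with Theorem~\ref{1.1}(2). For (3), the key is the mod-$2$ identity $d \cdot f_{d-1}(P) \equiv f_{d-2}(\partial P) \pmod{2}$, obtained by counting incidences of ridges with facets in the $(d-1)$-manifold-with-boundary $\Gamma(P)$ and using that every interior ridge lies in exactly two facets. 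Since $f_{d-1}(P) = \sum_k h_k(P)$ and $f_{d-2}(\partial P) = \sum_k \partial h_k$, this gives $d \sum_k h_k(P) \equiv \sum_k \partial h_k \pmod{2}$; when $d$ is odd and $\partial h_n = 0$ for some $1 \leq n \leq d-2$, Theorem~\ref{1.1}(3) applied to $\partial P$ forces $\sum_k \partial h_k$ to be even, and hence $\sum_k h_k(P)$ is even.

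For (4)--(7) the arguments are more delicate. I would prove (4) by analyzing the face ring $A_P$ modulo a linear system of parameters $\Theta$: the vanishing $\partial h_n = 0$ forces the image of the boundary ideal in $A_P/\Theta$ to exhibit a Gorenstein-type symmetry in its first $n$ graded components, and comparing Hilbert functions yields the inequality $h_k + h_{k-1} + \cdots + h_{k-n+1} \geq \partial h_k$. For (5), (6), (7), the plan is to combine the algebraic consequences of the combined vanishing hypotheses with a parity argument: in each case one shows that the hypotheses force the existence of an auxiliary simplicial cell sphere (a suitable subcomplex, quotient, or double of $P$) whose $h$-vector has a zero entry in an interior position, and Theorem~\ref{1.1}(3) applied to that sphere yields evenness of a sum that, via the parity identity from (3), translates into evenness of $\sum_k h_k(P)$. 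The main technical content is verifying, for each of the distinct hypothesis patterns in (5)--(7), that a compatible zero entry does appear in an associated sphere's $h$-vector.

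For sufficiency, the plan is an explicit construction, for each $\hvec$ satisfying (1)--(7), of a simplicial cell ball with that $h$-vector; this extends Kolins' constructions from the dimension-$\leq 6$ case to arbitrary dimension. I would build $P$ from a small collection of elementary pieces---stacked balls, bipyramids, and related elementary cell complexes---glued along common boundary faces, exploiting the flexibility of the simplicial-poset framework, which permits face identifications unavailable in simplicial complexes. The rough strategy is first to realize $\partial \hvec$ as a simplicial cell $(d-2)$-sphere using Theorem~\ref{1.1}, then to attach interior cells degree by degree to match each prescribed $h_k$. The principal obstacle lies on the boundary of the feasible region, where one or more of conditions (3)--(7) is tight: a default construction there typically produces a ball with the wrong parity for $\sum h_k$, so a separate ``parity-toggling'' local modification---altering the facet count by one without disturbing the remaining entries of $\hvec$ or the boundary structure---must be incorporated, and verifying the compatibility of these modifications with the overall gluing scheme will be the most delicate step.
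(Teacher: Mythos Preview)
Your treatment of (1)--(3) is fine and matches the paper in spirit, but the plan for (4)--(7) has a genuine gap, and the sufficiency sketch is too far from a workable strategy.

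\textbf{Necessity of (5)--(7).} You propose to produce, from the hypotheses, an auxiliary simplicial cell sphere with a zero interior $h$-entry and then invoke Theorem~\ref{1.1}(3). This does not work. The natural candidate spheres are $\partial P$, the double of $P$, and the cone-off $SP = P \cup C(\partial P)$; for $SP$ one has $h_k(SP) = h_k + \partial h_{k-1}$, and the hypotheses $\partial h_i = 0$, $h_j = 0$ (with possibly $j \neq i+1$) do \emph{not} force any $h_k(SP)$ to vanish. The paper instead uses Masuda's lemma: if every squarefree degree-$d$ monomial $x_{v_1}\cdots x_{v_d}$ vanishes in $K[SP]/(\Theta)$ for an l.s.o.p.\ $\Theta$, then the number of facets of $SP$ is even. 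The algebraic work in each of (5), (6), (7) is to show precisely this vanishing, by factoring $x_{v_1}\cdots x_{v_d}$ into pieces that lie in the ideals $L+(\Theta)$ and $J+(\Theta)$ corresponding to the interior of $P$ and of $C(\partial P)$, using the numerical hypotheses to force enough of the Artinian reductions to collapse. This is a substantially sharper tool than Theorem~\ref{1.1}(3), and your outline does not suggest it.

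\textbf{Necessity of (4).} ``Gorenstein-type symmetry'' is not the mechanism. The actual argument is that $\partial h_n = 0$ forces $\theta_d^n$ to lie in the ideal $J+(\overline\Theta)$ (where $J$ cuts out $\partial P$ and $\overline\Theta = \theta_1,\dots,\theta_{d-1}$), so $J+(\overline\Theta)$ contains $I_P+(\overline\Theta,\theta_d^n)$; comparing Hilbert functions and computing $\dim_K\big(K[P]/(\overline\Theta,\theta_d^n)\big)_k = h_k+\cdots+h_{k-n+1}$ via Cohen--Macaulayness gives the inequality.

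\textbf{Sufficiency.} Realizing $\partial \hvec$ as a sphere and then ``attaching interior cells'' is not how the construction goes, and it is unclear how one would control $h(P)$ this way. The paper builds the ball directly by a shelling-type process: a basic move (glue two $(d-1)$-simplices to the current ball along suitable boundary pieces) adds $\ee_i+\ee_j$ to the $h$-vector while maintaining a large $\Delta_d(\ell)$ in the boundary. This already handles the cases where $\partial\hvec$ is positive or $\sum h_k$ is even. The remaining case ($\partial\hvec$ has a zero and $\sum h_k$ is odd) needs an additional special constructible ball whose boundary simultaneously contains disjoint copies of $\Delta_d(n)$ and $\Delta_d(d-n)$; constructing this piece and showing (using conditions (4), (6), (7)) that the residual vector can be absorbed by further pair-gluings is the heart of the argument. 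Your ``parity-toggling local modification'' cannot work as stated: changing the facet count by one changes $\sum_k h_k$ and hence some $h_k$, so you cannot leave $\hvec$ undisturbed.
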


Note that conditions (5), (6) and (7) are unnecessary when $d$ is odd.
We also note that some special cases of the above theorem are due to Kolins \cite{Ko}.
For necessity,
(1), (2), (3) and some special cases of (4), (5) and (7) (the condition (4) for $n=1$, the condition (5) for $i=1,2$ and $j=1$,
and the condition (7) for $\ell=1$)
were proved in \cite[Sections 3, 4 and 5]{Ko}.
Also, when $\partial \hvec$ is positive or $\sum_{k=0}^d h_k$ is even,
the sufficiency of the theorem was proved in \cite[Theorem 16]{Ko}.
In particular, for even dimensional simplicial cell balls,
the sufficiency of the theorem is due to Kolins.

A simplicial poset $P$ (or a CW-complex $\Gamma(P)$) is called a \textit{simplicial cell decomposition of a topological space $X$}
if $\Gamma(P)$ is homeomorphic to $X$.
It was proved in \cite{Mu} that
face vectors of simplicial cell decompositions of
some manifolds without boundary, such as product of spheres and real projective spaces,
are characterized by conditions similar to those in Theorem \ref{1.1}.
It would be of interest to find manifolds $M$ with boundary
such that face vectors of simplicial cell decompositions of $M$
can be characterized by conditions similar to those in Theorem \ref{1.2}.

This paper is organized as follows.
In Section 2,
we prove the necessity of Theorem \ref{1.2}
by using the face ring of a simplicial poset.
In Section 3,
we prove the sufficiency of Theorem \ref{1.2}
by using constructibility and shellability.

\section{Proof of necessity}

In this section, we prove the necessity of Theorem \ref{1.2}.
To prove this, we need the face ring of a simplicial poset introduced by Stanley \cite{St}.
We assume familiarity with commutative algebra theory.

We recall some basic notations on commutative algebra.
Let $K$ be an infinite field,
$R=R_0 \bigoplus R_1 \bigoplus \cdots$
a finitely generated commutative graded $K$-algebra with $R_0=K$,
where $R_k$ is the graded component of $R$ of degree $k$,
and let $M$ be a finitely generated graded $R$-module.
The \textit{Hilbert series $H_M(t)$ of $M$}
is the formal power series $H_M(t)=\sum _{k = 0}^\infty (\dim_K M_k )t^k$.
The \textit{Krull dimension} of $M$, denoted $\dim M$,
is the minimal $m$ for which there exist homogeneous elements 
$\theta_1,\dots,\theta_m \in R$ of positive degrees such that $M/(\theta_1,\dots,\theta_m)M$
is a finite dimensional $K$-vector space.
When $\dim M=d$,
a sequence of homogeneous elements $\theta_1,\dots,\theta_d \in R$ of positive degrees
such that $M/(\theta_1,\dots,\theta_d)M$ is a finite dimensional $K$-vector space
is called a
\textit{homogeneous system of parameters} (\textit{h.s.o.p.\ }for short) \textit{of} $M$.
Moreover, if $\theta_1,\dots,\theta_d$ have degree $1$,
then we call $\theta_1,\dots,\theta_d$ a \textit{linear system of parameters} (\textit{l.s.o.p.\ }for short) \textit{of} $M$.
We say that $M$ is \textit{Cohen-Macaulay}
if,
for every (equivalently some) h.s.o.p.\ $\theta_1,\dots,\theta_d$ of $M$,
one has that $\theta_i$ is a non-zero divisor of $M/(\theta_1,\dots,\theta_{i-1})M$ for all $i=1,2,\dots,d$.
Also,
$R$ is said to be Cohen-Macaulay if it is a Cohen-Macaulay $R$-module.

Let $P$ be a $(d-1)$-dimensional simplicial poset
and let $A=K[x_\sigma: \sigma \in P \setminus \{ \hat 0\}]$
be the polynomial ring over an infinite field $K$ in indeterminates indexed by the elements in $P \setminus \{\hat 0\}$.
We define the grading of $A$ by $\deg x_\sigma = \rank \sigma$.
The \textit{face ring of $P$} is the quotient ring $K[P]=A/I_P$,
where $I_P$ is the ideal generated by the following elements:
\begin{itemize}
\item $x_\sigma x_\tau$, if $\sigma,\tau \in P$ have no common upper bounds in $P$.
\item $x_\sigma x_\tau - x_{\sigma \wedge \tau} \sum_\rho x_\rho$,
where the summation runs over the all minimal upper bounds of $\sigma$ and $\tau$ and where $\sigma \wedge \tau$ is the meet (largest lower bound) of $\sigma$ and $\tau$,
otherwise.
(We consider $x_{\sigma \wedge \tau}=1$ if $\sigma \wedge \tau=\hat 0$.)
\end{itemize}
Note that,
for all $\sigma,\tau \in P$ such that $\sigma$ and $\tau$ have a common upper bound $\rho$,
there is the unique largest lower bound of $\sigma$ and $\tau$ since $[\hat 0,\rho]$ is a Boolean algebra.
Also, since generators of $I_P$ are homogeneous,
the ring $K[P]$ is graded.
In the special case when $P$ is the face poset of a simplicial complex,
the ring $K[P]$ is isomorphic to the face ring (Stanley-Reisner ring) of a simplicial complex \cite[p.\ 53]{St2}.
See \cite[p.\ 467]{MMP}.

A simplicial poset $P$ is said to be \textit{Cohen-Macaulay}
if the ring $K[P]$ is Cohen-Macaulay.
Note that this  is equivalent to saying that the order complex of $P \setminus \{\hat 0\}$
is a Cohen-Macaulay simplicial complex.
On face rings of simplicial posets,
the following properties are known.
\begin{itemize}
\item The Krull dimension of $K[P]$ is $\dim P +1$.
\item 
$K[P]$ has an l.s.o.p.
\item $H_{K[P]}(t)=(h_0+h_1t+ \cdots + h_dt^d) /(1-t)^d$,
where $d=\dim P +1$ and where $(h_0,h_1,\dots,h_d)=h(P)$.
\item If $\Gamma(P)$ is homeomorphic to a ball or a sphere then $P$ is Cohen-Macaulay.
\end{itemize}
See \cite[pp.\ 325--326]{St} for the first three properties.
The last property follows from a topological criterion of Cohen-Macaulay simplicial complexes
\cite[II, Corollary 4.2 and Proposition 4.3]{St2}.

A subset $I$ of a simplicial poset $P$ is said to be an \textit{order ideal of }$P$
if $\sigma \in I$ and $\tau \leq \sigma$ imply $\tau \in I$.
Thus an order ideal of a simplicial poset is again a simplicial poset.
For a simplicial poset $P$ and elements $\sigma_1,\dots,\sigma_k \in P$,
we write $\langle\sigma_1,\dots,\sigma_k \rangle$ for the order ideal of $P$ generated by $\sigma_1,\dots,\sigma_k$,
in other words,
$$\langle \sigma_1,\dots,\sigma_k\rangle=\{\tau \in P: \mbox{ there is $1 \leq j \leq k$ such that } \tau \leq \sigma_j\}.$$
For elements $f_1,\dots,f_m$ in a ring $A$,
we write $(f_1,\dots,f_m)$ for the ideal of $A$
generated by $f_1,\dots,f_m$.
We often use the following obvious fact.

\begin{lemma}
\label{subposet}
Let $P$ be a simplicial poset  and $A=K[x_\sigma: \sigma \in P \setminus \{\hat 0\}]$.
If $Q$ is an order ideal of $P$ then
$A/(I_P+(x_\sigma: \sigma \not \in Q))$ is isomorphic to $K[Q]$ as a ring.
\end{lemma}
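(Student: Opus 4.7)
The plan is to exhibit mutually inverse ring homomorphisms between $K[Q]$ and $A/(I_P+J)$, where $J=(x_\sigma : \sigma \notin Q)$. The natural candidate $\phi: A \to K[Q]$ sends $x_\sigma \mapsto x_\sigma$ when $\sigma \in Q \setminus \{\hat 0\}$ and $x_\sigma \mapsto 0$ otherwise. For the reverse direction, the inclusion of variables $K[x_\sigma : \sigma \in Q \setminus \{\hat 0\}] \hookrightarrow A$ composed with the quotient $A \to A/(I_P+J)$ gives a candidate $\psi: K[Q] \to A/(I_P+J)$. Once both are shown to be well-defined, it is immediate on generators that $\phi \circ \psi = \mathrm{id}$ and $\psi \circ \phi$ agrees with the quotient map $A \to A/(I_P+J)$, so the two quotients are identified.

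The only substantive point is the well-definedness, and both directions hinge on a single observation about upper bounds: if $\sigma, \tau \in Q$, then the minimal upper bounds of $\sigma$ and $\tau$ in $Q$ are exactly those minimal upper bounds in $P$ that happen to lie in $Q$. Indeed, given a $Q$-minimal upper bound $\rho' \in Q$, pick a $P$-minimal upper bound $\rho \leq \rho'$; the order-ideal property forces $\rho \in Q$, whence $\rho = \rho'$ by minimality. Conversely, any $P$-minimal upper bound lying in $Q$ is obviously $Q$-minimal. Note also that $\sigma \wedge \tau$ lies in $Q$ as soon as $\sigma, \tau$ do, again by the order-ideal property.

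Armed with this, one checks the generators directly. For a generator $x_\sigma x_\tau$ of the first type (no common upper bound in $P$), either one of $\sigma, \tau$ is outside $Q$ and $\phi$ sends the monomial to $0$, or both lie in $Q$ and then there is also no common upper bound in $Q$, so the monomial maps to a first-type generator of $I_Q$. For a generator $x_\sigma x_\tau - x_{\sigma \wedge \tau} \sum_\rho x_\rho$ of the second type with $\sigma, \tau \in Q$, applying $\phi$ deletes exactly the terms indexed by $P$-minimal upper bounds outside $Q$; by the observation the surviving sum is the corresponding $I_Q$ relation. The same bookkeeping shows that every $I_Q$ generator lifts to an element of $I_P + J$ in $A$, giving well-definedness of $\psi$. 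I do not anticipate a real obstacle: the content is entirely the order-ideal compatibility of meets and minimal upper bounds.
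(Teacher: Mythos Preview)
Your argument is correct and is exactly the standard verification; the paper itself states the lemma as an ``obvious fact'' and gives no proof, so there is nothing further to compare. The one case you do not spell out explicitly is a second-type generator $x_\sigma x_\tau - x_{\sigma\wedge\tau}\sum_\rho x_\rho$ with, say, $\sigma\notin Q$: there every upper bound $\rho\geq\sigma$ also lies outside $Q$ by the order-ideal property, so $\phi$ kills the entire relation---but this is routine and your confidence is justified.
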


Let $P$ be a simplicial cell $(d-1)$-ball.
Then each rank $d-1$ element of $P$ is covered by at most two elements.
The \textit{boundary $\partial P$ of $P$} is the order ideal
$$\partial P=\langle \sigma \in P: \rank \sigma=d-1,\ \sigma \mbox{ is covered by exactly one element in }P\rangle.$$
Since $\partial P$ is the face poset of the boundary cell complex of $\Gamma(P)$,
$\partial P$ is a simplicial cell $(d-2)$-sphere.
Moreover,
it is known that if $\hvec =(h_0,h_1,\dots,h_d)$ is the $h$-vector of $P$
then $\partial \hvec=(\partial h_0,\partial h_1,\dots,\partial h_{d-1})$
is the $h$-vector of $\partial P$.
See \cite[Section 3]{Ko}.
On $h$-vectors of simplicial cell balls,
the following result was proved in \cite[Theorem 5]{Ko}.

\begin{theorem}[Kolins]
\label{kolins}
Let $P$ be a simplicial cell $(d-1)$-ball and $\hvec=h(P)=(h_0,h_1,\dots,h_d)$.
Then
\begin{itemize}
\item[(1)] $h_0=1,$ $h_d=0$ and $h_k \geq 0$ for $k=1,2,\dots,d-1$.
\item[(2)] $\partial h_k \geq 0$ for $k=0,1,\dots,d-1$.
\item[(3)] If $d$ is odd and $\partial h_n=0$ for some $1 \leq n \leq d-2$ then $\sum_{k=0}^d h_k$ is even.
\end{itemize}
\end{theorem}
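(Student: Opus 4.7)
The plan is to deduce all three conditions from the structural facts about $K[P]$ listed just before the statement, together with Theorem~\ref{1.1} applied to the boundary sphere $\partial P$.

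For (1), the equality $h_0=1$ is forced by $f_{-1}=1$. For the nonnegativity of $h_1,\dots,h_{d-1}$, I would use that $P$ is Cohen-Macaulay (since $\Gamma(P)$ is a ball by the last listed property), so an l.s.o.p.\ $\theta_1,\dots,\theta_d$ of $K[P]$ forms a regular sequence and
\[
H_{K[P]/(\theta_1,\dots,\theta_d)}(t) \;=\; (1-t)^d H_{K[P]}(t) \;=\; h_0 + h_1 t + \cdots + h_d t^d.
\]
Since the left-hand side is an honest Hilbert series, each $h_k$ is nonnegative. The equality $h_d=0$ follows from the standard identity $h_d = (-1)^{d-1}\tilde\chi(\Gamma(P))$, obtained by extracting the coefficient of $t^d$ in the defining relation between the $f$- and $h$-vectors, together with $\tilde\chi(\Gamma(P))=0$ for a contractible space.

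For (2) and (3), I would invoke the identity $\partial \hvec = h(\partial P)$ (stated in the text and proven in \cite[Section~3]{Ko}), which realizes $\partial\hvec$ as the $h$-vector of the simplicial cell $(d-2)$-sphere $\partial P$. Then (2) is immediate from Theorem~\ref{1.1}(2). For (3), Theorem~\ref{1.1}(3) applied to $\partial P$ forces $\sum_{k=0}^{d-1}\partial h_k$ to be even whenever $\partial h_n=0$ for some $1\leq n\leq d-2$. A direct bookkeeping, swapping the order of summation in $\partial h_k = \sum_{j=0}^k h_j - \sum_{j=d-k}^d h_j$, gives
\[
\sum_{k=0}^{d-1}\partial h_k \;=\; \sum_{i=0}^{d}(d-2i)\,h_i,
\]
and when $d$ is odd every coefficient $d-2i$ is odd, so this sum is congruent to $\sum_{k=0}^d h_k$ modulo~$2$, yielding~(3).

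The only step requiring genuine geometric input is the identity $\partial\hvec = h(\partial P)$; everything else reduces to the Cohen-Macaulay formalism, an Euler-characteristic calculation, or a parity count. Since this identity is not the novel contribution here and is cited from \cite{Ko}, I would treat it as a black box.
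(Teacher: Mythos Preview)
Your proposal is correct and follows essentially the same approach as the paper: Cohen--Macaulayness gives nonnegativity of $\hvec$ and $\partial\hvec$, the Euler characteristic gives $h_d=0$, and Theorem~\ref{1.1}(3) applied to $\partial P$ handles part~(3). The only cosmetic difference is in the parity bookkeeping for~(3): the paper passes through the middle entry $\partial h_{(d-1)/2}$ (using the symmetry $\partial h_i=\partial h_{d-1-i}$ to see it must be even, and computing it directly as $\sum h_k$ mod~$2$), whereas you compute $\sum_{k}\partial h_k=\sum_i(d-2i)h_i$ in one stroke; both routes are equally elementary.
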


We sketch a proof of the above theorem.
The vectors $\hvec$ and $\partial \hvec$ are non-negative
since $h$-vectors
of Cohen-Macaulay simplicial posets are non-negative \cite[Theorem 3.10]{St}.
Also, since $h_d(P)$ is equal to the reduced Euler characteristic of $P$ times $(-1)^{d-1}$,
$h_d(P)=0$ if $P$ is a simplicial cell ball.
Finally, (3) follows from Theorem \ref{1.1} as follows:
If $d$ is odd then 
$$\partial h_{\frac {d-1} 2}= (h_0+\cdots +h_{\frac {d-1} 2}) - (h_{\frac {d+1} 2}+ \cdots +h_d)$$
is equal to $\sum_{k=0}^d h_k$ mod $2$.
Since $\sum_{k=0}^{d-1} \partial h_k$ is even by Theorem \ref{1.1}(3),
by the symmetry $\partial h_i=\partial h_{d-1-i}$ of $\partial \hvec$,
$\partial h_{\frac {d-1} 2}$ must be even.

In the rest of this section,
we prove that conditions (4), (5), (6) and (7) in Theorem \ref{1.2} are necessary conditions 
of $h$-vectors of simplicial cell balls.

\subsection{Proof of (4)}\

\begin{lemma}
\label{trivial}
Let $P$ be a $(d-1)$-dimensional simplicial poset and
$h(P)=(h_0,\dots,h_d)$.
Let $K[P]=A/I_P$ be the face ring of $P$ and $\theta_1,\dots,\theta_d \in A_1$ an l.s.o.p.\ of $K[P]$.
If $K[P]$ is Cohen-Macaulay then, for all integers $n >0$ and $n \leq k \leq d$,
one has
$$\dim_K \big(A/\big(I_P+(\theta_1,\dots,\theta_{d-1},\theta_d^n)\big)\big)_k
=h_k+h_{k-1} + \cdots + h_{k-n+1}.$$
\end{lemma}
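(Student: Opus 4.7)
The plan is a straightforward Hilbert-series computation, exploiting that an l.s.o.p.\ on a Cohen-Macaulay module is a regular sequence.

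First, since $K[P]$ is Cohen-Macaulay of Krull dimension $d$, the l.s.o.p.\ $\theta_1,\dots,\theta_d$ is a regular sequence on $K[P]$. Let $B = K[P]/(\theta_1,\dots,\theta_{d-1})$. Each $\theta_i$ is a non-zero divisor of degree $1$ on the corresponding quotient, so killing it multiplies the Hilbert series by $(1-t)$. Using the formula $H_{K[P]}(t) = (h_0 + h_1 t + \cdots + h_d t^d)/(1-t)^d$ from the properties of face rings recalled above, I get
\[
H_B(t) \;=\; \frac{h_0 + h_1 t + \cdots + h_d t^d}{1-t}.
\]

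Next, since $\theta_1,\dots,\theta_d$ is regular on $K[P]$, the image of $\theta_d$ in $B$ is a non-zero divisor on $B$, and therefore so is $\theta_d^n$. The short exact sequence
\[
0 \longrightarrow B(-n) \xrightarrow{\;\theta_d^n\;} B \longrightarrow B/\theta_d^n B \longrightarrow 0
\]
yields
\[
H_{B/\theta_d^n B}(t) \;=\; (1-t^n) H_B(t) \;=\; (h_0 + h_1 t + \cdots + h_d t^d)(1 + t + \cdots + t^{n-1}).
\]

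Third, by Lemma \ref{subposet}-type reasoning (or simply by the third isomorphism theorem), $B/\theta_d^n B$ is isomorphic as a graded ring to $A/(I_P + (\theta_1,\dots,\theta_{d-1},\theta_d^n))$. Finally, reading off the coefficient of $t^k$ in the product above, for $n \leq k \leq d$ the relevant indices $i \in \{k-n+1,\dots,k\}$ all lie in $\{0,1,\dots,d\}$, and I obtain
\[
\dim_K \bigl(A/(I_P+(\theta_1,\dots,\theta_{d-1},\theta_d^n))\bigr)_k \;=\; h_k + h_{k-1} + \cdots + h_{k-n+1},
\]
as desired. There is no substantive obstacle here; the only point to be careful about is keeping track of degree shifts in the short exact sequence and verifying that the range $n \leq k \leq d$ ensures no boundary effects in the coefficient extraction.
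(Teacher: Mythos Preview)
Your argument is correct and follows essentially the same route as the paper: both compute the Hilbert series of $K[P]/(\theta_1,\dots,\theta_{d-1})$ using the regularity of the l.s.o.p., then use the exact sequence for multiplication by $\theta_d^n$ to read off the desired dimensions. The only cosmetic differences are that the paper records $\dim_K(R[d-1])_k$ explicitly before subtracting, whereas you multiply Hilbert series by $(1-t^n)$ directly, and your passing mention of Lemma~\ref{subposet} is a misfire (that lemma concerns order ideals, not quotients by linear forms), though your parenthetical ``third isomorphism theorem'' is the correct justification.
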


\begin{proof}
Let $R[i]=A/(I_P+(\theta_1,\dots,\theta_{i}))$ for $i=0,1,\dots,d-1$.
Since $R[0]=K[P]$ is Cohen-Macaulay,
we have the exact sequence
$$0 \longrightarrow R[i-1] \stackrel{\times \theta_i}{\longrightarrow} R[i-1] \longrightarrow R[i] \longrightarrow 0$$
for $i=1,2,\dots,d-1$.
The exact sequences
show that $H_{R[i]}(t)=(1-t)H_{R[i-1]}(t)$ for $i=1,2,\dots,d-1$.
Then, 
since $H_{K[P]}(t)=(h_0+h_1t+\cdots+h_dt^d)/(1-t)^d$,
we have $H_{R[d-1]}(t)=(h_0+h_1t+\cdots+h_dt^d)/(1-t)$.
This fact says	
$$\dim_K \big(R[d-1]\big)_k = h_k + h_{k-1} + \cdots + h_1 +h_0$$
for $k=0,1,\dots,d.$
Also, since $\theta_1,\dots,\theta_{d-1},\theta_d^n$ is an h.s.o.p.\ of $K[P]$,
the sequence
$$0 \longrightarrow R[d-1] \stackrel{\times \theta_d^n}{\longrightarrow} R[d-1] \longrightarrow A/\big(I_P+(\theta_1,\dots,\theta_{d-1},\theta_d^n)\big) \longrightarrow	 0$$
is exact.
Hence
\begin{eqnarray*}
\dim_K \big(A/\big(I_P+(\theta_1,\dots,\theta_{d-1},\theta_d^n)\big)\big)_k
&=& \dim_K \big(R[d-1] \big)_k - \dim_K \big( R[d-1] \big)_{k-n}\\
&=& h_k+h_{k-1} + \cdots + h_{k-n+1}
\end{eqnarray*}
for $k=n,n+1,\dots,d$,
as desired.
\end{proof}

\begin{theorem}
\label{necessary1}
Let $P$ be a simplicial cell $(d-1)$-ball and $\hvec =h(P)=(h_0,h_1,\dots,h_d)$.
If $\partial h_n=0$ for some $1 \leq n \leq d-2$ then
$$ h_k + h_{k-1} + \cdots +h_{k-n+1} \geq \partial h_k
\ \ \mbox{ for } k=n,n+1,\dots,d-1.$$
\end{theorem}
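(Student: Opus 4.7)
The plan is to exploit the natural surjection $\pi \colon K[P] \twoheadrightarrow K[\partial P]$ provided by Lemma \ref{subposet} (applied to the order ideal $Q=\partial P$), whose kernel is $J=(x_\sigma : \sigma \in P \setminus \partial P)$. The strategy is to use the hypothesis $\partial h_n=0$ to force a suitable $n$th power of a parameter to lie in $J$ modulo the other parameters, and then to compare Hilbert functions via Lemma \ref{trivial}.

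First I would choose a generic l.s.o.p.\ $\theta_1,\dots,\theta_d$ of $K[P]$ with the additional property that $\pi(\theta_1),\dots,\pi(\theta_{d-1})$ is an l.s.o.p.\ of $K[\partial P]$. Such a choice exists because both rings are Cohen-Macaulay, $K$ is infinite, and the restriction map on linear parts is surjective, so the two ``avoid all associated primes'' conditions are each Zariski-open and nonempty. Since $K[\partial P]$ is Cohen-Macaulay with $h$-vector $\partial \hvec$, the quotient $K[\partial P]/(\pi(\theta_1),\dots,\pi(\theta_{d-1}))$ has Hilbert series $\sum_{i=0}^{d-1} \partial h_i\, t^i$. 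The hypothesis $\partial h_n = 0$ therefore kills everything in degree $n$, so $\pi(\theta_d)^n \in (\pi(\theta_1),\dots,\pi(\theta_{d-1}))$ in $K[\partial P]$, and lifting via $\pi$ yields
$$\theta_d^n \in (\theta_1,\dots,\theta_{d-1}) + J \quad \text{inside } K[P].$$

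This containment produces a graded surjection
$$K[P]\big/(\theta_1,\dots,\theta_{d-1},\theta_d^n) \twoheadrightarrow K[P]\big/\bigl((\theta_1,\dots,\theta_{d-1})+J\bigr) \cong K[\partial P]\big/(\pi(\theta_1),\dots,\pi(\theta_{d-1})).$$
For each $k$ with $n \le k \le d-1$, Lemma \ref{trivial} identifies the $K$-dimension of the degree-$k$ part of the source as $h_k+h_{k-1}+\cdots+h_{k-n+1}$, while the target has degree-$k$ dimension $\partial h_k$. Comparing dimensions along the surjection gives precisely the desired inequality.

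The main obstacle is the genericity step, since the map $A_1 \to K[\partial P]_1$ has a nontrivial kernel (the linear forms supported on interior vertices of $P$), so one must verify that an l.s.o.p.\ of $K[P]$ can be chosen whose first $d-1$ entries descend to an l.s.o.p.\ of $K[\partial P]$. Given that $K$ is infinite and both Zariski-open conditions are nonempty, this is routine but is the only nontrivial piece of bookkeeping; the rest is a direct Hilbert-function comparison.
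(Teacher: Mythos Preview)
Your argument is correct and is essentially the same as the paper's proof: both pick linear forms that are simultaneously an l.s.o.p.\ for $K[P]$ and (after dropping $\theta_d$) for $K[\partial P]$, use $\partial h_n=0$ to place $\theta_d^n$ in $(\theta_1,\dots,\theta_{d-1})+J$, and then compare the Hilbert function of $K[P]/(\theta_1,\dots,\theta_{d-1},\theta_d^n)$ (computed by Lemma~\ref{trivial}) with that of $K[\partial P]/(\overline\Theta)$ via the resulting surjection. The only cosmetic difference is that the paper phrases everything in the ambient polynomial ring $A$ and via ideal containments, whereas you work directly in $K[P]$ with the quotient map $\pi$; the genericity concern you flag is handled in the paper by the same Zariski-openness remark (citing a criterion for l.s.o.p.'s of face rings).
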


\begin{proof}
Let $A=K[x_\sigma: \sigma \in P \setminus \{ \hat 0\}]$
and $J=I_P + (x_\sigma: \sigma \not \in \partial P) \subset A$.
Then $A/J \cong K[\partial P]$ by Lemma \ref{subposet}.
For a general choice of linear forms $\theta_1,\dots,\theta_d \in A_1$,
we have (e.g.\ use a criterion of an l.s.o.p.\ for face rings \cite[Lemma 6]{Ko})
\begin{itemize}
\item $\theta_1,\dots,\theta_{d-1}$ is an l.s.o.p.\ of $A/J$.
\item $\theta_1,\dots,\theta_{d-1},\theta_d$ is an l.s.o.p.\ of $A/I_P$.
\end{itemize}
Let $\overline \Theta = \theta_1,\dots,\theta_{d-1}$.
Since
$A/J$ is a $(d-1)$-dimensional Cohen--Macaulay ring,
\begin{eqnarray*}
\label{B}
\dim_K \left(A/ \left( J+ (\overline \Theta) \right) \right)_k = h_k(\partial P)=\partial h_k
\end{eqnarray*}
for $k = 0,1,\dots,d-1$.
In particular,
since $\dim_K(A/(J+(\overline{\Theta})))_n=\partial h_n =0$ by the assumption,
we have $\theta_d^n \in J+ (\overline {\Theta}).$
Then,
since $J \supset I_P$,
we have $J+(\overline\Theta) \supset I_P + (\overline\Theta)+(\theta_d^n)$.
Thus
\begin{eqnarray*}
\dim_K \big( A/ \big( I_P + (\overline {\Theta})+ (\theta_d^n ) \big) \big)_ k \geq \dim_K \left( A/ \left( J+(\overline {\Theta}) \right) \right)_k=\partial h_k
\end{eqnarray*}
for all $k =0,1,\dots,d-1$.
Then the statement follows from Lemma \ref{trivial}.
\end{proof}

\subsection{Proof of (5), (6) and (7)}\

Let $P$ be a $(d-1)$-dimensional simplicial poset.
A rank $1$ element of $P$ is called a \textit{vertex of $P$}
and a maximal element (w.r.t.\ a partial order) of $P$ is called a \textit{facet of $P$}.
The \textit{cone of $P$} is the product of the posets 
$$C(P)=P \times \{1,2\}=\{(\sigma,i): \sigma \in P,\ i=1,2\},$$
where the ordering is defined by $(\sigma,i) \geq (\tau,j)$ if $\sigma \geq \tau$ and $i \geq j$.
Then $C(P)$ is simplicial and $\Gamma(C(P))$
is homeomorphic to the (topological) cone of $\Gamma(P)$.
Also, a straightforward computation shows $h_i(P)=h_i(C(P))$ for all $i=0,1,\dots,d$
and $h_{d+1}(C(P))=0$.

Let $P$ be a simplicial cell ball.
We define the simplicial poset $SP$ by
$$SP= P \cup C(\partial P)/\! \sim$$
where $\sim$ is the equivalence relation defined by $\sigma \sim (\sigma,1)$ for all $\sigma \in \partial P$.
In other words,
$SP$ is a simplicial cell sphere such that $\Gamma(SP)$
is obtained from $\Gamma(P)$ by coning off its boundary.

We need
the next result due to Masuda \cite[Section 5]{Ma} (see also \cite{MR} for a simplified proof).

\begin{lemma}[Masuda]
\label{2-2}
Let $P$ be a simplicial cell $(d-1)$-sphere,
$K[P]=A/I_P$ the face ring of $P$
and $\Theta=\theta_1,\dots,\theta_d \in A_1$ an l.s.o.p.\ of $K[P]$.
If $x_{v_1} x_{v_2} \cdots x_{v_d}$ is zero in $A/(I_P+(\Theta))$
for any sequence of $d$ distinct vertices $v_1,v_2,\dots,v_d$ of $P$ then the number of the facets of $P$ is even.
\end{lemma}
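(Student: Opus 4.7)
The plan is to exploit the Gorenstein structure of $K[P]$ when $P$ is a simplicial cell $(d-1)$-sphere. By Theorem~\ref{1.1}(1), $h_d(P)=1$, and by a classical result of Stanley, $K[P]$ is Gorenstein. Consequently $B := A/(I_P+(\Theta))$ is a $0$-dimensional graded Gorenstein $K$-algebra with Hilbert function $h(P)$, whose socle $B_d$ is one-dimensional; fix a generator $\omega \in B_d$.

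First I would unwind the defining relations of the face ring. Let $V(F)$ denote the set of atoms of the Boolean interval $[\hat 0, F]$. An induction on $d$ using the relation $x_\sigma x_\tau = x_{\sigma \wedge \tau} \sum_\rho x_\rho$ shows that, for any distinct vertices $v_1, \ldots, v_d$ of $P$,
$$x_{v_1} x_{v_2} \cdots x_{v_d} \equiv \sum_{F :\, V(F) = \{v_1, \ldots, v_d\}} x_F \pmod{I_P},$$
the sum being empty if no facet has that vertex set. Summing the hypothesized vanishing of the left-hand side over all $d$-element subsets of vertices collapses this double sum to $\sum_F x_F = 0$ in $B_d$, and writing $x_F = c_F \omega$ with $c_F \in K$ gives $\sum_F c_F = 0$ in $K$.

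The main obstacle, and the heart of the argument, is converting the identity $\sum_F c_F = 0$ into a parity statement on the number of facets. My plan is to show, after possibly passing to a generic perturbation of $\Theta$, that each $c_F$ equals $\epsilon_F \cdot c$ for a common nonzero scalar $c \in K$, where $\epsilon_F \in \{+1,-1\}$ are the signs of a coherent orientation of $\Gamma(P)$ as an oriented pseudomanifold. The key algebraic input is the adjacency relation
$$x_F - x_{F'} = x_\tau\,(x_v - x_{v'})$$
in $K[P]$, valid whenever $\tau$ is a rank $d-1$ element covered by exactly the two distinct facets $F, F'$ with $V(F) \setminus V(\tau) = \{v\}$ and $V(F') \setminus V(\tau) = \{v'\}$, $v \ne v'$. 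Since $\Gamma(P)$ is a sphere, hence an orientable pseudomanifold, iterating this identity across the dual graph of facets propagates a globally consistent sign pattern.

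Once $c_F = \epsilon_F \cdot c$ is established, the identity $\sum_F c_F = 0$ becomes $\sum_F \epsilon_F = 0$, forcing the facets with $\epsilon_F = +1$ to be equinumerous with those having $\epsilon_F = -1$. The total number of facets of $P$ is therefore even. The delicate step is verifying the orientation-sign identification for $c_F$; the simplified proof in \cite{MR} presumably provides a direct formula for $c_F$ in terms of the coefficients of $\Theta$.
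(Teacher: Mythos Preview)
The paper does not prove this lemma; it is quoted as a known result of Masuda \cite{Ma}, with a pointer to the simplified argument of Miller--Reiner \cite{MR}. So there is no in-paper proof to compare your proposal against --- both you and the paper ultimately defer to the original sources.

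Your reduction to $\sum_F c_F = 0$ in $B_d$ is correct and clean: the identity $x_{v_1}\cdots x_{v_d}\equiv\sum_{V(F)=\{v_1,\dots,v_d\}} x_F$ does hold in $K[P]$, and summing the hypothesis over all $d$-subsets of vertices yields $\sum_F x_F = 0$ in the one-dimensional socle. But step~5 is where the content lies, and what you have written there is not an argument. Two concrete gaps: first, even when $v\neq v'$, the adjacency identity $x_F-x_{F'}=x_\tau(x_v-x_{v'})$ is an equation in $K[P]_d$, and passing to $B_d$ you still have to identify the class of $x_\tau(x_v-x_{v'})$ in terms of $\omega$ --- which is exactly the problem you are trying to solve, so nothing has been gained. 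Second, in a simplicial poset (as opposed to a simplicial complex) two facets sharing a ridge $\tau$ can have the \emph{same} opposite vertex $v=v'$; then the face-ring relation gives $x_\tau x_v = x_F + x_{F'}$, not $x_F$ alone, and your stated adjacency identity is unavailable. Since the dual graph of a sphere is connected, your ``propagate along the dual graph'' plan must traverse such edges, and you have no mechanism for them. The actual arguments in \cite{Ma,MR} supply precisely this missing identification of each $x_F$ in the socle; without it your outline remains a plan rather than a proof.
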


In the rest of this section,
we fix the following notation:
Let $P$ be a simplicial cell $(d-1)$-ball and
$$\hvec =h(P)=(h_0,h_1,\dots,h_d).$$
Let $Q=C(\partial P)$ and $SP=P \cup C(\partial P)/\! \sim$ be simplicial posets defined as above.
Also, let $A=K[x_\sigma: \sigma \in SP \setminus\{\hat 0\}]$,
$$J=I_{SP}+(x_\sigma: \sigma \in SP \setminus P)=I_{SP}+(x_\sigma: \sigma \in Q \setminus \partial P),$$
$$L= I_{SP}+(x_\sigma: \sigma \in SP \setminus Q)
=I_{SP}+(x_\sigma: \sigma \in P \setminus \partial P),$$
and let $\Theta=\theta_1,\theta_2,\dots,\theta_d \in A_1$ be a sequence of general linear forms.
Then
we have the following properties.
\begin{itemize}
\item
$A/J \cong K[P]$,
$A/L\cong K[Q]$ and
$$A/(L+J)= A/\big(I_{SP} + (x_\sigma: \sigma \in SP \setminus \partial P)\big) \cong K[\partial P].$$
\item
$\overline \Theta =\theta_1,\dots,\theta_{d-1}$ is an l.s.o.p.\ of $A/(L+J)$
and $\Theta=\theta_1,\dots,\theta_{d}$ is a common l.s.o.p.\ of $A/I_{SP}$, $A/J$ and $A/L$.
\end{itemize}

Recall that $\sum_{k=0}^d h_k$ is equal to the number of the facets of $P$.
The next lemma shows that,
to prove (5)--(7),
it is enough to study the number of the facets of $SP$.

\begin{lemma}
\label{facets}
Suppose $\partial h_i=0$ for some $1 \leq i \leq d-2$.
If the number of the facets of $SP$ is even then
$\sum_{k=0}^d h_k$ is even.
\end{lemma}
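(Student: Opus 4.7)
The plan is to reduce the claim to a direct facet count followed by an application of Theorem~\ref{1.1} to the boundary sphere $\partial P$. First I would decompose the facets of $SP$. Since the identification $\sim$ glues $P$ to $C(\partial P)$ only along faces that lie in $\partial P$, and since every face of $\partial P$ has rank at most $d-1$ in the $(d-1)$-dimensional poset $SP$, no facet of $SP$ is affected by $\sim$. Hence the facets of $SP$ split into two disjoint groups: the facets of $P$, and the ``outer'' cone facets, namely elements of the form $(\sigma,2)$ with $\sigma$ a facet of $\partial P$. This yields that the number of facets of $SP$ equals the number of facets of $P$ plus the number of facets of $\partial P$.

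Next I would translate both counts into $h$-vector sums. Setting $t=1$ in the defining identity for the $h$-vector of $P$ gives that the number of facets of $P$ equals $\sum_{k=0}^d h_k$, and applying the same identity to the $(d-2)$-dimensional poset $\partial P$ (whose $h$-vector is $\partial\hvec$, as recalled just before Theorem~\ref{kolins}) gives that the number of facets of $\partial P$ equals $\sum_{k=0}^{d-1} \partial h_k$. The hypothesis $\partial h_i=0$ for some $1 \leq i \leq d-2$ lies precisely in the vanishing range appearing in condition~(3) of Theorem~\ref{1.1} applied to the simplicial cell $(d-2)$-sphere $\partial P$ (whose $h$-vector is indexed by $0,1,\dots,d-1$), and therefore $\sum_{k=0}^{d-1} \partial h_k$ is even.

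Combining the two steps gives
$$\sum_{k=0}^d h_k = \big(\text{number of facets of } SP\big) - \sum_{k=0}^{d-1} \partial h_k,$$
which is even by the hypothesis on the facets of $SP$ together with the parity conclusion above. There is no real technical obstacle here; the only point requiring care is the bookkeeping in the first paragraph, namely verifying that $\sim$ never merges a facet of $P$ with a facet coming from the outer cone, which is immediate from the rank bound on faces of $\partial P$.
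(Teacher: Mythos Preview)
Your proposal is correct and follows essentially the same approach as the paper's proof: both argue that the facets of $SP$ split as the facets of $P$ together with the cone facets over $\partial P$, yielding (\# facets of $P$) $=$ (\# facets of $SP$) $-$ (\# facets of $\partial P$), and then invoke Theorem~\ref{1.1}(3) for the sphere $\partial P$ to conclude that the number of facets of $\partial P$ is even. The paper states this in two sentences, while you spell out the facet bookkeeping and the translation to $h$-vector sums in more detail, but the argument is the same.
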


\begin{proof}
By the construction of $SP$,
the number of the facets of $P$ is equal to that of $SP$ minus that of $\partial P$.
Then the statement follows 
from Theorem \ref{1.1}(3).
\end{proof}

\begin{lemma}
\label{int1}
Let $v_1,v_2,\dots,v_d$ be distinct vertices of $SP$.
Then
\begin{itemize}
\item[(i)] If $\partial h_i=0$ for some $1 \leq i \leq d-2$ then
$x_{v_1}x_{v_2} \cdots x_{v_i} \in L+(\Theta)$.
\item[(ii)] If $h_j=0$ for some $1 \leq j \leq d$ then
$x_{v_d} x_{v_{d-1}} \cdots x_{v_{d-j+1}} \in
J+(\Theta)$.
\end{itemize}
\end{lemma}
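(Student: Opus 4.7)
The plan is to deduce both statements as immediate Hilbert-series consequences of the Cohen--Macaulayness of $A/J \cong K[P]$ and $A/L \cong K[Q]$. The idea is that the hypotheses $\partial h_i = 0$ (respectively $h_j = 0$) force the degree-$i$ component of $A/(L+(\Theta))$ (respectively the degree-$j$ component of $A/(J+(\Theta))$) to vanish, which automatically places any monomial of that degree in the corresponding ideal.

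For (i), I would first observe that since $\partial P$ is a simplicial cell sphere, $K[\partial P]$ is Cohen--Macaulay; taking a cone preserves Cohen--Macaulayness, so $K[Q]$ is Cohen--Macaulay as well. From the cone computation recalled in the setup, $h_k(Q) = h_k(\partial P) = \partial h_k$ for $k = 0, 1, \ldots, d-1$ and $h_d(Q) = 0$. Since $\Theta$ is an l.s.o.p.\ of $A/L \cong K[Q]$, the Cohen--Macaulay Hilbert-series formula gives
\[
H_{A/(L+(\Theta))}(t) = (1-t)^d H_{K[Q]}(t) = \sum_{k=0}^{d-1} \partial h_k \, t^k.
\]
In particular $\partial h_i = 0$ forces $(A/(L+(\Theta)))_i = 0$, so every degree-$i$ element of $A$ belongs to $L + (\Theta)$. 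Because each vertex variable $x_v$ has $\deg x_v = \rank v = 1$, the monomial $x_{v_1} x_{v_2} \cdots x_{v_i}$ has degree $i$ and therefore lies in $L + (\Theta)$.

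For (ii), the argument is completely parallel. Here $K[P]$ is Cohen--Macaulay because $P$ is a simplicial cell ball, and $\Theta$ is also an l.s.o.p.\ of $A/J \cong K[P]$, so
\[
H_{A/(J+(\Theta))}(t) = (1-t)^d H_{K[P]}(t) = \sum_{k=0}^{d} h_k \, t^k.
\]
Hence $h_j = 0$ kills the degree-$j$ component of $A/(J+(\Theta))$, so the product $x_{v_d} x_{v_{d-1}} \cdots x_{v_{d-j+1}}$, being a degree-$j$ monomial in $A$, lies in $J + (\Theta)$.

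There is essentially no obstacle: both parts reduce to a Hilbert-series identity which is immediate from the Cohen--Macaulay properties already listed in the setup. The distinctness of the vertices $v_1,\dots,v_d$ plays no role in this argument; it is presumably stipulated so that the lemma can later be combined with Masuda's Lemma \ref{2-2} to control the parity of the number of facets of $SP$.
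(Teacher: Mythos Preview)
Your proof is correct and follows essentially the same approach as the paper's own proof: both arguments use the Cohen--Macaulayness of $A/L\cong K[Q]$ and $A/J\cong K[P]$ together with the l.s.o.p.\ $\Theta$ to compute $\dim_K(A/(L+(\Theta)))_i=\partial h_i$ and $\dim_K(A/(J+(\Theta)))_j=h_j$, and then observe that the relevant vertex monomial has the right degree. Your observation that distinctness of the $v_k$ is irrelevant here (and is only needed later for Masuda's Lemma~\ref{2-2}) is also correct.
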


\begin{proof}
Recall $A/J \cong K[P]$
and $A/L \cong K[Q]$.
Since $Q$ is the cone of $\partial P$,
\begin{eqnarray*}
\label{XX1}
\dim_K \big( A/ \big( L+(\Theta) \big) \big)_k=h_k(Q)=h_k(\partial P)=\partial h_k
\end{eqnarray*}
for $k =0,1,\dots,d-1$.
Thus,  $\partial h_i=0$ implies
$x_{v_{1}}x_{v_2} \cdots x_{v_i} \in L+(\Theta).$
Similarly, since $\dim_K (A/(J+(\Theta)))_j=h_j$, $h_j=0$ implies
$x_{v_d} x_{v_{d-1}} \cdots x_{v_{d-j+1}} \in J+(\Theta)$.
\end{proof}

Now we prove (5) and (6).

\begin{theorem}
\label{2-3}
If $\partial h_i=0$ and $h_j=0$ for some positive integers $i$ and $j$ with $i+j \leq d$ then $\sum_{k=0}^d h_k$ is even.
\end{theorem}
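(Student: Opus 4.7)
The plan is first to reduce via Lemma \ref{facets}: since $\partial h_i=0$ with $1\le i\le d-2$ (where the upper bound $i\le d-2$ comes for free from the symmetry $\partial h_i=\partial h_{d-1-i}$ of the sphere $\partial P$, which gives $\partial h_{d-1}=\partial h_0=1$), it suffices to prove that the number of facets of $SP$ is even. By Masuda's criterion (Lemma \ref{2-2}), applied to the simplicial cell $(d-1)$-sphere $SP$ with the l.s.o.p.\ $\Theta$, this in turn reduces to showing that for any $d$ distinct vertices $v_1,\ldots,v_d$ of $SP$, the monomial $x_{v_1}\cdots x_{v_d}$ vanishes in $A/(I_{SP}+(\Theta))$.

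Next, I split this monomial as the product of three blocks
$$(x_{v_1}\cdots x_{v_i})\cdot (x_{v_{i+1}}\cdots x_{v_{d-j}})\cdot (x_{v_{d-j+1}}\cdots x_{v_d}),$$
where the middle factor is empty precisely when $i+j=d$. The hypothesis $i+j\le d$ is exactly what guarantees that the index ranges $\{1,\ldots,i\}$ and $\{d-j+1,\ldots,d\}$ are disjoint, so this factorization genuinely exhausts the original monomial. By Lemma \ref{int1}(i), the first block lies in $L+(\Theta)$, and by Lemma \ref{int1}(ii), the third block lies in $J+(\Theta)$.

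The main obstacle, and the only nontrivial content of the argument, is the ideal inclusion
$$L\cdot J\subseteq I_{SP}.$$
Once this is granted, the monomial above lies in $(L+(\Theta))(J+(\Theta))\subseteq LJ+(\Theta)\subseteq I_{SP}+(\Theta)$, which is exactly the hypothesis needed to invoke Masuda's criterion. To prove the inclusion, it suffices to check it on generators: the only nontrivial case is $x_\sigma x_\tau$ with $\sigma\in P\setminus\partial P$ and $\tau\in Q\setminus\partial P$. By the construction $SP=P\cup Q/\!\sim$, where the gluing identifies the two copies of $\partial P$, any common upper bound of $\sigma$ and $\tau$ in $SP$ would have to lie simultaneously above something in $P\setminus\partial P$ and above something in $Q\setminus\partial P$, hence in the overlap $\partial P$. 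But $\partial P$ is an order ideal of $SP$, so nothing in $\partial P$ can sit above either $\sigma$ or $\tau$. Therefore $\sigma$ and $\tau$ admit no common upper bound in $SP$, whence $x_\sigma x_\tau\in I_{SP}$ by the very definition of the face ring, completing the argument.
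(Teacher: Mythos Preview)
Your proof is correct and follows essentially the same approach as the paper's: reduce via Lemma~\ref{facets} and Masuda's criterion (Lemma~\ref{2-2}) to showing $x_{v_1}\cdots x_{v_d}\in I_{SP}+(\Theta)$, then use Lemma~\ref{int1} on disjoint blocks of the monomial and the key observation that $x_\sigma x_\tau\in I_{SP}$ whenever $\sigma\in P\setminus\partial P$ and $\tau\in Q\setminus\partial P$ because such a pair has no common upper bound in $SP$. Your justification of $i\le d-2$ via the symmetry of $\partial\hvec$ is a nice explicit check that the paper leaves implicit.
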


\begin{proof}
Let $v_1,v_2,\dots,v_d$ be distinct vertices of $SP$.
By Lemmas \ref{2-2} and \ref{facets}, it is enough to prove that 
the monomial $x_{v_1}x_{v_2}\cdots x_{v_d}$ is zero in $A/(I_{SP} +(\Theta))$.

Since $i+j \leq d$,
Lemma \ref{int1} shows
$$
x_{v_1} x_{v_2} \cdots x_{v_d} \in
LJ+(\Theta) \subset
 I_{SP} +(\Theta)+(x_\sigma x_\tau :
\sigma \in P \setminus \partial P,\
\tau \in Q \setminus \partial P).$$
However, for all $\sigma \in P \setminus \partial P$ and $\tau \in Q \setminus \partial P$,
we have $x_\sigma x_\tau \in I_{SP}$ since $\sigma$ and $\tau$ have no common upper bounds.
Thus $x_{v_1} x_{v_2} \cdots x_{v_d} \in I_{SP}+(\Theta)$.
\end{proof}

\begin{theorem}
\label{necessary7}
Suppose $\partial h_n = 0$ for some $1 \leq n < \frac d 2$.
If $(h_\ell + h_{\ell-1} + \cdots + h_{\ell-n+1}) -\partial h_\ell < n$ for some $ n \leq \ell \leq d-n$
then $\sum_{k=0}^d h_k$ is even. 
\end{theorem}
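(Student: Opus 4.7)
The plan is to mimic the proof of Theorem~\ref{2-3}. By Lemma~\ref{facets} and Lemma~\ref{2-2}, it suffices to prove that $x_{v_1} \cdots x_{v_d} \in I_{SP} + (\Theta)$ for any $d$ distinct vertices $v_1, \ldots, v_d$ of $SP$. From $\partial h_n = 0$ and Lemma~\ref{int1}(i), I would start with $x_{v_1} \cdots x_{v_n} \in L + (\Theta)$. Expanding $L = I_{SP} + (x_\sigma : \sigma \in P \setminus \partial P)$ produces a congruence
\[
x_{v_1} \cdots x_{v_n} \equiv \sum_{\sigma \in P \setminus \partial P} x_\sigma \lambda_\sigma \pmod{I_{SP} + (\Theta)}
\]
for suitable $\lambda_\sigma \in A$. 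Multiplying by $x_{v_{n+1}} \cdots x_{v_d}$ reduces the problem to showing that every summand $x_\sigma \lambda_\sigma x_{v_{n+1}} \cdots x_{v_d}$ lies in $I_{SP} + (\Theta)$.

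The easy sub-case is when the apex $(\hat 0, 2)$ of $C(\partial P)$ appears among $v_1, \ldots, v_d$. Using the symmetry in $v_1, \ldots, v_n$ I may relabel so that $v_d$ is the apex. Since $\sigma \in P \setminus \partial P$ lies in the interior of $P$ and the apex lies in $SP \setminus P$, the elements $\sigma$ and $v_d$ have no common upper bound in $SP$, so $x_\sigma x_{v_d} \in I_{SP}$, and every summand vanishes modulo $I_{SP}$. The substantive case is when all of $v_1, \ldots, v_d$ are vertices of $P$.

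In this remaining case, the hypothesis $(h_\ell + \cdots + h_{\ell - n + 1}) - \partial h_\ell < n$ plays the role that $h_j = 0$ did in Theorem~\ref{2-3}. By Lemma~\ref{trivial} applied to $K[P]$, combined with $\theta_d^n \in L + J + (\overline{\Theta})$ (which follows from $\partial h_n = 0$ via $(A/(L + J + (\overline{\Theta})))_n = \partial h_n = 0$), the hypothesis translates into the statement that the image of $L$ in the Artinian ring $S = A/(J + (\overline{\Theta}) + (\theta_d^n))$ has dimension strictly less than $n$ in degree $\ell$. I would combine this dimension restriction with three further inputs: $x_{v_1} \cdots x_{v_d} \in J + (\Theta)$ (from $h_d = 0$), $x_{v_1} \cdots x_{v_d} \in L + (\Theta)$ (applying the opening reduction to any $n$ of the $v_i$), and the combinatorial identity $L \cdot J \subset I_{SP}$ (since $x_\sigma x_\tau \in I_{SP}$ whenever $\sigma \in P \setminus \partial P$ and $\tau \in SP \setminus P$). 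The hard part will be to thread these ingredients together using the multiplication-by-$\theta_d^{n-1}$ map on $S$ — which, thanks to $\theta_d^n = 0$ in $S$, factors through $S / \theta_d S \cong K[P]/(\Theta K[P])$ and has controllable image in each Cohen--Macaulay component — so as to force the class of $x_{v_1} \cdots x_{v_d}$ in $A/(I_{SP} + (\Theta))$ to vanish.
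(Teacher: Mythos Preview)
Your setup matches the paper's: reduce via Lemmas~\ref{2-2} and~\ref{facets} to showing $x_{v_1}\cdots x_{v_d}\in I_{SP}+(\Theta)$, invoke Lemma~\ref{int1}(i) for $x_{v_1}\cdots x_{v_n}\in L+(\Theta)$, and use $LJ\subset I_{SP}+(\Theta)$. You also correctly identify the crucial dimension bound: the image of $L$ in $A/(J+(\overline\Theta,\theta_d^n))$ has dimension $(h_\ell+\cdots+h_{\ell-n+1})-\partial h_\ell<n$ in degree~$\ell$. But the last paragraph is not a proof; it names inputs without showing how they combine, and in fact the inputs you name are the wrong ones.

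The gap is this: to exploit $LJ\subset I_{SP}+(\Theta)$ you need a factor in $J+(\Theta)$ of degree at most $d-n$, so that multiplied against the degree-$n$ factor in $L+(\Theta)$ it still sits inside the degree-$d$ monomial. Your input ``$x_{v_1}\cdots x_{v_d}\in J+(\Theta)$ from $h_d=0$'' is a degree-$d$ statement and cannot be paired with anything of positive degree in $L$. What the paper does instead is produce a degree-$\ell$ factor in $J+(\Theta)$: setting $u_k=x_{v_d}\cdots x_{v_{d-k+1}}$, the $n$ elements $u_\ell,\,u_{\ell-1}\theta_d,\,\ldots,\,u_{\ell-n+1}\theta_d^{n-1}$ all lie in $L+J+(\overline\Theta)$ (because $A_n\subset L+J+(\overline\Theta)$ when $\partial h_n=0$, and each element is a product of linear forms of total degree $\ell\ge n$). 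Since the image of $L$ in degree~$\ell$ has dimension $<n$, these $n$ elements are linearly dependent modulo $J+(\overline\Theta,\theta_d^n)$; taking the first nonzero coefficient and using that $\theta_d$ is a non-zero-divisor on $A/(J+(\overline\Theta))$ yields $u_{\ell-s}\in J+(\Theta)$ for some $s$, hence $u_\ell\in J+(\Theta)$. Since $n+\ell\le d$, this combines with $x_{v_1}\cdots x_{v_n}\in L+(\Theta)$ to finish. Your vague ``multiplication-by-$\theta_d^{n-1}$'' gesture does not supply this linear-dependence-plus-regularity argument. (Incidentally, your apex case split is unnecessary: the paper's argument works uniformly for arbitrary vertices of $SP$.)
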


\begin{proof}
Let $v_1,v_2,\dots,v_d$ be distinct vertices of $SP$.
Then $x_{v_1} x_{v_2} \cdots x_{v_n} \in L+(\Theta)$ by Lemma \ref{int1}(i).
Since $n+ \ell \leq d$,
by the same argument as in the proof of Theorem \ref{2-3},
it is enough to prove that
$$x_{v_d}x_{v_{d-1}}\cdots x_{v_{d-\ell+1}} \in J+(\Theta).$$

Consider the exact sequence
$$
0 \longrightarrow
N\longrightarrow
A/\big(J+(\overline \Theta,\theta_d^n)\big)
\longrightarrow
A/\big(L+J+(\overline \Theta,\theta_d^n)\big)
\longrightarrow 
0,
$$
where $N=(L+J+(\overline \Theta,\theta_d^n))
/(J+(\overline \Theta,\theta_d^n))$.
Since $A/(L+J)\cong K[\partial P]$ and since $\overline \Theta$ is an l.s.o.p.\ of $A/(L+J)$,
we have $\dim_K (A/(L+J+(\overline \Theta)))_n=\partial h_n=0$ and
$$A_n=\big(L+J+(\overline \Theta)\big)_n.$$
In particular, we have
$L+J+(\overline \Theta)=L+J+(\overline \Theta,\theta_d^n)$.
Then,
by Lemma \ref{trivial} we have
\begin{eqnarray}
\label{qqq}
\hspace{16pt} \dim_K N_\ell&=&\dim_K \big(A/\big(J+(\overline \Theta,\theta_d^n)\big)
\big)_\ell
-
\dim_K \big(A/\big(L+J+(\overline \Theta,\theta_d^n)\big)
\big)_\ell\\
\nonumber & =&
(h_\ell+ h_{\ell-1}+\cdots + h_{\ell-n+1}) - \partial h_\ell \\
\nonumber &\leq& n-1,
\end{eqnarray}
where the last inequality follows from the assumption.

Let $u_k=x_{v_d} x_{v_{d-1}} \cdots x_{v_{d-k+1}}$
for $k=1,2,\dots,\ell$.
Since $A_n=(L+J+(\overline \Theta))_n$,
any element in $A$ of degree $n$ is contained in $L+J+(\overline \Theta)$.
Since $\ell \geq n$, this fact says that the elements
$$u_\ell,u_{\ell-1} \theta_d,\dots,u_{\ell-n+1}\theta_d^{n-1}$$
are contained in $L+J+(\overline \Theta)$
since they are products of elements of degree $1$.
Then \eqref{qqq}
says that the above $n$ elements are $K$-linearly dependent in $N$.
Thus there are $\alpha_0,\alpha_1,\dots,\alpha_{n-1} \in K$
with $(\alpha_0,\alpha_1,\dots,\alpha_{n-1}) \ne (0,0,\dots,0)$
such that
$$
\alpha_0 u_\ell + \alpha_1 u_{\ell-1} \theta_d + \cdots + \alpha_{n-1} u_{\ell-n+1}\theta_d^{n-1}
\in J+(\overline \Theta,\theta_d^n).$$
Let $s = \min \{k : \alpha_k \ne 0\}$.
Then there is an $h \in A$ such that
$$\theta_d^s \alpha_s u_{\ell-s} + \theta_d^{s+1} h \in J+ (\overline \Theta).$$
Since $\Theta$ is an l.s.o.p.\ of a Cohen-Macaulay ring $A/J$, $\theta_d$ is a non-zero divisor of $A/(J+(\overline \Theta))$.
Thus
$$
\alpha_s u_{\ell-s}  + \theta_d h \in  J +(\overline \Theta).
$$
This fact implies $u_{\ell-s} \in J+(\Theta)$.
Since $u_{\ell-s}$ divides $u_\ell$,
we have
$$
x_{v_d}x_{v_{d-1}} \cdots x_{v_{d-\ell+1}}
= u_{\ell}
\in J+(\Theta),$$
as desired.
\end{proof}

Next, we study the condition (7).
To simplify the notation,
we write
$R=A/J \cong K[P]$ and let
$$C=(L+J)/J=
\big((x_\sigma: \sigma \in P \setminus \partial P)+J \big)/J.$$
Thus $C$ is the ideal of $R$ generated by the interior faces of $P$.

\begin{lemma}
\label{2.3.1}
$C$ is a $d$-dimensional Cohen-Macaulay $A$-module such that
$H_C(t)={(h_d+h_{d-1}t + \cdots + h_0t^d)} /{(1-t)^d}$.
\end{lemma}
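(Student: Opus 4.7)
The plan is to exploit the short exact sequence
$$0 \longrightarrow C \longrightarrow R \longrightarrow R/C \longrightarrow 0$$
of graded $A$-modules. First I would identify the quotient: by the third isomorphism theorem, $R/C = (A/J)\big/\bigl((L+J)/J\bigr) \cong A/(L+J) \cong K[\partial P]$. Since $P$ is a simplicial cell $(d-1)$-ball and $\partial P$ is a simplicial cell $(d-2)$-sphere, both $R \cong K[P]$ and $R/C \cong K[\partial P]$ are Cohen--Macaulay, of Krull dimensions $d$ and $d-1$ respectively (these facts are recorded in the list of properties of face rings earlier in this section).

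For the Cohen--Macaulay property of $C$, I would apply the standard depth lemma to the short exact sequence above:
$$\depth C \;\geq\; \min\bigl(\depth R,\ \depth(R/C)+1\bigr) \;=\; \min(d,\ d) \;=\; d.$$
On the other hand $C \subseteq R$ forces $\dim C \leq \dim R = d$, so $\depth C = \dim C = d$ and $C$ is a $d$-dimensional Cohen--Macaulay $A$-module.

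For the Hilbert series, I would use additivity, $H_C(t) = H_R(t) - H_{R/C}(t)$, together with the known formulas
$$H_R(t) = \frac{h_0 + h_1 t + \cdots + h_d t^d}{(1-t)^d}, \qquad H_{R/C}(t) = \frac{\partial h_0 + \partial h_1 t + \cdots + \partial h_{d-1} t^{d-1}}{(1-t)^{d-1}}.$$
Multiplying through by $(1-t)^d$, the numerator of $H_C(t)$ becomes
$$\sum_{i=0}^d h_i t^i - (1-t)\sum_{i=0}^{d-1} \partial h_i t^i,$$
and the identity $\partial h_i - \partial h_{i-1} = h_i - h_{d-i}$ (with the conventions $\partial h_{-1}=0$ and $\partial h_{d-1} = h_0 - h_d$) collapses this expression to $\sum_{i=0}^d h_{d-i} t^i$, as desired.

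I do not expect a genuine obstacle: the argument is essentially a two-line application of the depth lemma combined with a routine Hilbert-series computation. The only thing to be careful about is the bookkeeping on boundary terms in the telescoping sum that produces the reversed numerator.
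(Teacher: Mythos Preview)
Your proposal is correct and follows essentially the same route as the paper: both use the short exact sequence $0 \to C \to R \to R/C \to 0$, identify $R/C \cong K[\partial P]$, compute $H_C(t)$ by additivity of Hilbert series, and deduce the Cohen--Macaulay property from a depth comparison in the exact sequence (the paper cites \cite[Corollary~18.6]{E}, which is precisely the depth lemma you invoke). Your write-up simply makes the telescoping in the Hilbert-series step and the depth inequality explicit.
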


\begin{proof}
Consider the exact sequence
\begin{eqnarray*}
\label{E2.4}
0 \longrightarrow
C \longrightarrow
R \longrightarrow
R/C \longrightarrow
0.
\end{eqnarray*}
Observe $R \cong K[P]$ and $R/C = A/(L+J) \cong K[\partial P]$.
Then
$$
H_C(t)=H_R(t)-H_{R/C}(t)=H_{K[P]}(t)-H_{K[\partial P]}(t)
=\frac {(h_d+h_{d-1}t + \cdots + h_0t^d)} {(1-t)^d}.
$$
Also, $C$ is a $d$-dimensional Cohen-Macaulay $A$-module since
$R/C$ is a $(d-1)$-dimensional Cohen-Macaulay $A$-module
and $R$ is a $d$-dimensional Cohen-Macaulay $A$-module
(e.g., use \cite[Corollary 18.6]{E}).
\end{proof}

\begin{remark}
The ideal $C$ is the canonical module of $R$ (see \cite[Chapter 3]{BH}).
This fact can be proved in the same way as in the proof of \cite[Theorem 5.7.1]{BH},
and gives an alternative proof of Lemma \ref{2.3.1}.
\end{remark}

Since $C$ is a submodule of $R$ with $\dim C=\dim R$
and since $C$ is Cohen-Macaulay,
$\Theta$ is an l.s.o.p.\ of $C$
and we have
$$\dim_K \big(C/(\Theta C)\big)_i=h_{d-i}$$
for $i=0,1,\dots,d$ by Lemma \ref{2.3.1}.
This fact shows

\begin{lemma}
\label{E2.5}
If $h_j=0$ for some $1 \leq j \leq d-1$ then
$C_{d-j}=\Theta C_{d-j-1}.$
\end{lemma}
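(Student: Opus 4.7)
The plan is to deduce the lemma as a direct corollary of the dimension count $\dim_K(C/(\Theta C))_i = h_{d-i}$ that was just established. The key observation is that the hypothesis $h_j = 0$ translates, via this identity specialized at $i = d-j$, into the vanishing of the degree $d-j$ component of the quotient $C/(\Theta C)$.

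First I would record the chain $\Theta C_{d-j-1} \subseteq (\Theta C)_{d-j} \subseteq C_{d-j}$. The leftmost inclusion is actually an equality because $\theta_1, \ldots, \theta_d$ are linear forms, so every homogeneous element of $(\Theta C)_{d-j}$ is a $K$-linear combination of products $\theta_k \cdot c$ with $c \in C_{d-j-1}$. This is also where the hypothesis $j \leq d-1$ enters: it guarantees $d-j-1 \geq 0$ so that $C_{d-j-1}$ is well-defined. Second, from $\dim_K (C/(\Theta C))_{d-j} = h_j = 0$ I would conclude $C_{d-j} = (\Theta C)_{d-j}$, and chaining this with the previous equality yields $C_{d-j} = \Theta C_{d-j-1}$, as desired.

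There is no serious obstacle here; the proof is essentially a one-line consequence of Lemma \ref{2.3.1} together with the Cohen-Macaulay property of $C$, which combine to produce $H_{C/\Theta C}(t) = h_d + h_{d-1}t + \cdots + h_0 t^d$. The only point that requires a moment's care is verifying that $\Theta$ is genuinely an l.s.o.p.\ for $C$, but this is exactly what is asserted in the paragraph preceding the lemma, using that $C$ is Cohen-Macaulay of the same Krull dimension as $R$ and that $\Theta$ is already an l.s.o.p.\ of $R$.
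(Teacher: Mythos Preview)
Your proof is correct and follows exactly the same reasoning as the paper: the lemma is stated as an immediate consequence of the identity $\dim_K(C/(\Theta C))_i = h_{d-i}$ established just before it, and you have simply unpacked what that identity says in degree $d-j$.
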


\begin{lemma}
\label{2.3.2}
Suppose
$h_j=0$ for some $1 \leq j \leq d-1$.
If $\theta_d f  \in (C+\overline \Theta R)_{d-j}$ then,
for any linear form $l \in R_1$,
one has $l f \in C+\overline \Theta R$.
\end{lemma}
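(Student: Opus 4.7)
The plan is to prove the stronger statement that $f$ itself already lies in $C + \overline{\Theta}R$; once this is established, the conclusion $lf \in C + \overline{\Theta}R$ for every $l \in R_1$ follows immediately because $C$ is an ideal of $R$. The key combinatorial input will be Lemma \ref{E2.5}, which under the hypothesis $h_j = 0$ identifies $C_{d-j}$ with $\Theta C_{d-j-1}$ and thus allows a full multiplication by $\theta_d$ to be extracted from any element of $C_{d-j}$.

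First I would unpack the hypothesis by writing
$$\theta_d f = c + \theta_1 r_1 + \cdots + \theta_{d-1} r_{d-1}$$
with $c \in C_{d-j}$ and $r_i \in R_{d-j-1}$. Next, applying Lemma \ref{E2.5}, I would write $c = \theta_1 c_1 + \cdots + \theta_d c_d$ with each $c_i \in C_{d-j-1}$, and substitute this into the previous equation to obtain
$$\theta_d(f - c_d) = \theta_1(c_1 + r_1) + \cdots + \theta_{d-1}(c_{d-1} + r_{d-1}) \in \overline{\Theta}R.$$

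The finishing move is a standard regular-element cancellation. Since $R = K[P]$ is Cohen--Macaulay and $\Theta$ is an l.s.o.p.\ of $R$, the element $\theta_d$ is a non-zero divisor on $R/\overline{\Theta}R$; hence $f - c_d \in \overline{\Theta}R$, and so $f \in c_d + \overline{\Theta}R \subset C + \overline{\Theta}R$. Multiplying by any linear form $l \in R_1$ and using that $C$ is an ideal then gives $lf \in C + \overline{\Theta}R$, as desired.

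I do not expect a genuine obstacle here: the argument reduces to one application of Lemma \ref{E2.5} followed by one use of the non-zero-divisor property of $\theta_d$ modulo $\overline{\Theta}R$. The only point requiring care is degree bookkeeping --- because $f$ has degree $d-j-1$, Lemma \ref{E2.5} must be invoked in degree $d-j$ and delivers elements $c_i$ of degree exactly $d-j-1$, which is precisely what lets $\theta_d c_d$ be subtracted from $\theta_d f$ in the rearrangement above.
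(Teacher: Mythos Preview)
Your argument is correct and is essentially the same as the paper's: both prove the stronger fact that $f\in C+\overline\Theta R$ by using Lemma~\ref{E2.5} to rewrite the $C_{d-j}$-part as an element of $\overline\Theta R+\theta_d C$, then cancelling $\theta_d$ via its non-zero-divisor property on $R/\overline\Theta R$. The only difference is cosmetic---you spell out the coefficients $c_i,r_i$ explicitly, whereas the paper packages the same step as ``$\theta_d f\in \overline\Theta R+\theta_d C$, so $\theta_d(f-g)\in\overline\Theta R$ for some $g\in C$.''
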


\begin{proof}
By Lemma \ref{E2.5},
$\theta_d f \in \overline \Theta R + \theta_d C$.
Thus there is $g \in C$ such that
$\theta_d (f-g) \in \overline \Theta R$.
Since $\theta_d$ is a non-zero divisor of $R/(\overline \Theta R)$,
we have $f-g \in \overline \Theta R$
and $f \in C + \overline \Theta R$.
Hence $l f \in C+\overline \Theta R$.
\end{proof}

The next
theorem completes the proof of the necessity of Theorem \ref{1.2}.

\begin{theorem}
\label{necessity6}
Suppose $\partial h_i=0$ and $h_j=0$ for some integers $i$ and $j$ with $0< i < \frac d 2$ and $d-i  < j <d$.
If $\partial h_\ell \leq \ell$ for some $\ell \leq d-j$ then $\sum_{k=0}^d h_k$ is even.
\end{theorem}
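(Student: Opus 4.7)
I would follow the overall scheme of Theorems \ref{2-3} and \ref{necessary7}. By Lemma \ref{2-2} applied to the simplicial cell $(d-1)$-sphere $SP$ together with Lemma \ref{facets}, it suffices to prove that, for every sequence of $d$ distinct vertices $v_1,\dots,v_d$ of $SP$,
$$x_{v_1}x_{v_2}\cdots x_{v_d} \in I_{SP}+(\Theta).$$
From $h_j=0$ and Lemma \ref{int1}(ii), one has $x_{v_d}x_{v_{d-1}}\cdots x_{v_{d-j+1}} \in J+(\Theta)$. Since $\ell+j\le d$, the splitting argument of Theorems \ref{2-3} and \ref{necessary7} reduces the problem to proving $x_{v_1}\cdots x_{v_\ell}\in L+(\Theta)$; then
$$x_{v_1}\cdots x_{v_d}=(x_{v_1}\cdots x_{v_\ell})\cdot(x_{v_{\ell+1}}\cdots x_{v_{d-j}})\cdot(x_{v_{d-j+1}}\cdots x_{v_d})\in (L{+}(\Theta))\cdot A\cdot (J{+}(\Theta))\subseteq LJ+(\Theta)\subseteq I_{SP}+(\Theta).$$

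The heart of the proof is to establish $u'_\ell:=x_{v_1}\cdots x_{v_\ell}\in L+(\Theta)$, by a dual version of the linear-algebra argument of Theorem \ref{necessary7}. Because $\ell\le d-j<i$, Lemma \ref{int1}(i) does not apply at degree $\ell$. Instead, the hypothesis $\partial h_i=0$ yields $A_i\subseteq L+J+(\overline\Theta)$, whence $L+J+(\overline\Theta,\theta_d^i)=L+J+(\overline\Theta)$; the Artinian quotient $A/(L+J+(\overline\Theta))\cong K[\partial P]/\overline\Theta K[\partial P]$ has its degree-$\ell$ piece of dimension $\partial h_\ell\le \ell$. Writing $u'_k=x_{v_1}\cdots x_{v_k}$, the $\ell+1$ degree-$\ell$ elements
$$u'_\ell,\ u'_{\ell-1}\theta_d,\ u'_{\ell-2}\theta_d^2,\ \ldots,\ \theta_d^\ell$$
must therefore satisfy a nontrivial $K$-linear relation
$$\sum_{k=0}^{\ell}\alpha_k u'_{\ell-k}\theta_d^k\in L+J+(\overline\Theta),\qquad (\alpha_0,\dots,\alpha_\ell)\ne 0.$$
With $s=\min\{k:\alpha_k\ne 0\}$ and factoring out $\theta_d^s$, this becomes $\theta_d^s(\alpha_s u'_{\ell-s}+\theta_d h)\in L+J+(\overline\Theta)$ for some $h\in A$. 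Using Lemma \ref{2.3.2} (valid thanks to $h_j=0$, which supplies the identity $C_{d-j}=\Theta C_{d-j-1}$ from Lemma \ref{E2.5}) together with the fact that $\theta_d$ is a non-zero divisor on the Cohen-Macaulay ring $A/(L+(\overline\Theta))\cong K[Q]/\overline\Theta K[Q]$, one cancels $\theta_d^s$ and propagates the membership into $L+(\overline\Theta,\theta_d)=L+(\Theta)$. This yields $u'_{\ell-s}\in L+(\Theta)$, and since $u'_{\ell-s}$ divides $u'_\ell$ we conclude $u'_\ell\in L+(\Theta)$.

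The main obstacle is the cancellation step: the linear relation naturally sits modulo $L+J+(\overline\Theta)$, where $\theta_d$ is typically nilpotent, while the desired conclusion requires a relation modulo $L+(\Theta)$ with $J$ removed. Bridging this gap hinges on Lemma \ref{2.3.2}, which through $h_j=0$ converts a $\theta_d$-multiplication into an arbitrary linear-form multiplication on $R/(C+\overline\Theta R)$; propagating that injectivity to the degree $\ell-s$ relevant to our relation, and coordinating it with the dimension bound coming from $\partial h_\ell\le \ell$ and the simplification coming from $\partial h_i=0$, is the most delicate part of the argument.
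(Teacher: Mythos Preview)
Your reduction to showing $x_{v_1}\cdots x_{v_d}\in I_{SP}+(\Theta)$ and your use of the splitting $LJ\subset I_{SP}$ are correct, but the target you aim for in the ``hard'' half is the wrong one, and the tools you invoke cannot reach it.

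You want $u'_\ell=x_{v_1}\cdots x_{v_\ell}\in L+(\Theta)$, and you propose to get it from the linear relation
\[
\theta_d^{\,s}\bigl(\alpha_s u'_{\ell-s}+\theta_d h\bigr)\in L+J+(\overline\Theta)
\]
by combining Lemma~\ref{2.3.2} with the fact that $\theta_d$ is a non-zero divisor on $A/(L+(\overline\Theta))$. But these two tools live in different quotients and do not compose. Lemma~\ref{2.3.2} is a statement inside $R=A/J$: its hypothesis is $\theta_d f\in(C+\overline\Theta R)_{d-j}$ and its conclusion (via Lemma~\ref{E2.5}, which uses $h_j=0$) is $f\in C+\overline\Theta R$; translated back to $A$, both sides are containments in $L+J+(\overline\Theta)$, so no $J$ is ever removed. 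On the other hand, the non-zero-divisor property of $\theta_d$ on $A/(L+(\overline\Theta))$ would only let you cancel $\theta_d^s$ from a relation modulo $L+(\overline\Theta)$, whereas your relation holds modulo the strictly larger ideal $L+J+(\overline\Theta)$, on whose quotient $\theta_d$ is nilpotent (as you yourself note). Neither tool, nor their combination, produces a membership in $L+(\Theta)$ at degree~$\ell$. If instead you dualize Lemmas~\ref{2.3.1}--\ref{2.3.2} to $S=A/L$ (using $\partial h_i=0$ in place of $h_j=0$), the argument runs, but it lands at degree $d-i$: you obtain $u'_{d-i}\in L+(\Theta)$, and since $(d-i)+j>d$ this does not split against the length-$j$ product coming from $h_j=0$.

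The paper therefore reverses the roles you assigned to $L$ and $J$. The easy half is the $L$-side: $\partial h_i=0$ gives $x_{v_1}\cdots x_{v_i}\in L+(\Theta)$ directly by Lemma~\ref{int1}(i). The hard half is on the $J$-side: one proves $x_{v_d}\cdots x_{v_{j+1}}\in J+(\Theta)$ (a product of length $d-j$). Working in $R=A/J$, the bound $\dim_K(R/(C+\overline\Theta R))_\ell=\partial h_\ell\le\ell$ yields a dependence among $u_\ell,u_{\ell-1}\theta_d,\dots,\theta_d^\ell$ in $C+\overline\Theta R$; one then multiplies up to degree $d-j$ and applies Lemma~\ref{2.3.2} iteratively to swap each factor $\theta_d$ for one of the $x_{v_k}$, obtaining $\alpha_s u_{d-j}\in(C+\overline\Theta R+\theta_d R)_{d-j}\subset\Theta R$ by Lemma~\ref{E2.5}. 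This gives $u_{d-j}\in J+(\Theta)$, which is exactly the conclusion Lemma~\ref{2.3.2} is designed to deliver. Since $i+(d-j)\le d$ (indeed $i<j$), the splitting now goes through.
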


\begin{proof}
Let $v_1,v_2,\dots,v_d$ be distinct vertices of $SP$.
Since $i<j$,
in the same way as in the proof of Theorem \ref{2-3},
it is enough to prove that
$$x_{v_d} x_{v_{d-1}}\cdots x_{v_{j+1}} \in J + (\Theta).$$

Let $u_k=x_{v_d} x_{v_{d-1}} \cdots x_{v_{d-k+1}}$
for $k=0,1,\dots,d-j$,
where $u_0=1$.
We regard each $u_k$ as an element in $R=A/J$.
Recall that $\ell$ is an integer such that $\ell \leq d-j$ and $\partial h_\ell \leq \ell$.
Since $\overline \Theta$ is an l.s.o.p.\ of $R/C=A/(L+J) \cong K[\partial P]$,
$$\dim_K \big(R/(C+\overline \Theta R)\big)_\ell = \partial h_\ell \leq \ell.$$
Then
the elements
$$u_\ell, u_{\ell-1} \theta_d ,u_{\ell-2} \theta_d^2,\dots,u_0 \theta_d^\ell \in R_\ell$$
are $K$-linearly dependent in $R/(C+\overline \Theta R)$.
Thus there are $\alpha_0,\alpha_1,\dots,\alpha_{\ell} \in K$ 
with $(\alpha_0,\alpha_1,\dots,\alpha_{\ell}) \ne 0$
such that
\begin{eqnarray*}
\alpha_0 u_\ell  + \alpha_1 u_{\ell-1} \theta_d + \cdots + \alpha_\ell u_0 \theta_d^\ell
\in 
C+\overline \Theta R.
\end{eqnarray*}
Let $s = \min\{ k : \alpha_k \ne 0\}$
and $h=(\sum_{k=s+1}^{\ell} \alpha_k u_{\ell-k} \theta_d^{k})/\theta_d^{s+1}$,
where $h=0$ if $s=\ell$.
Then 
$\theta_d^{s} (\alpha_{s} u_{\ell-s}+ \theta_d h ) \in (C+ \overline \Theta R)_\ell$
and
$$
\theta_d^{d-j-(\ell-s)} (\alpha_{s} u_{\ell-s}+ \theta_d h ) \in (C+ \overline \Theta R)_{d-j}.
$$
By Lemma \ref{2.3.2},
we have $(u_{d-j}/u_{\ell-s}) (\alpha_s u_{\ell -s} + \theta_d h) \in C+\overline \Theta R$.
Hence
$$\alpha_s u_{d-j} \in (C+ \overline \Theta R+ \theta_d R)_{d-j} \subset \Theta R,$$
where the last inclusion follows from
Lemma \ref{E2.5}.
Since $\alpha_s \ne 0$,
$u_{d-j}$ is contained in $\Theta R$.
This fact shows
$u_{d-j}=x_{v_d} x_{v_{d-1}}\cdots x_{v_{j+1}} \in J + (\Theta)$
when we regard $u_{d-j}$ as an element of $A$.
\end{proof}

\section{Proof of  sufficiency}

In this section, we prove the sufficiency
of Theorem \ref{1.2}.

Since the proof is a bit long,
we first give a short outline of the proof.
The key tools of the proof are the two main lemmas outlined below.
\begin{itemize}
\item[(I)] In Lemma \ref{3.A},
we introduce a simple way to make a new simplicial cell ball from a given simplicial cell ball
by gluing a pair of simplexes along their boundaries.
\item[(II)] In Lemma \ref{3.B},
we construct a special simplicial cell ball whose boundary has a nice property.
\end{itemize}
Then we construct a simplicial cell ball with the desired $h$-vector as follows:
First, starting from a simplex,
we gradually make a larger simplicial cell ball by using (I) repeatedly;
Then, at a certain point,
we attach a simplicial cell ball in (II) to change the shape of the boundary;
Finally, we make a simplicial cell ball with the desired $h$-vector by using (I) again.
The first technique (I) appeared in \cite{Ko}.
Indeed, if $\partial \hvec$ is positive or $\sum_{k=0}^d h_k$ is odd,
then the first technique is enough to prove the desired statement.
However, we need (II)
when $\partial \hvec$ has a zero entry and $\sum_{k=0}^d h_k$ is even.

Before proving the main lemmas,
we first study what happens to the $h$-vector when we glue two simplicial posets.
Two finite posets $P$ and $Q$ are said to be \textit{isomorphic}
if there is a bijection $f:P \to Q$, called an \textit{isomorphism},
such that,
for all $ \sigma, \tau \in P$,
$\sigma> \tau$ if and only if $f(\sigma) > f(\tau)$.
We write $P \cong Q$ if $P$ and $Q$ are isomorphic.

Let $P$ and $Q$ be simplicial posets.
Let $I \subset P$ and $J \subset Q$ be order ideals which are isomorphic as posets.
For a given isomorphism $f : I \to J$,
we define
the equivalence relation $\sim_f$ on $P \cup Q$ by $\sigma \sim_f f(\sigma)$
for all $ \sigma \in I$,
and define
$$P \cup_f  Q = (P \cup Q) /\! \sim_f.$$
Then $P \cup_f Q$ is again a simplicial poset.
We write $P \cup _I Q$ (or $P {{}_I \cup_J} Q$) for this simplicial poset $P \cup_f Q$ when the isomorphism $f$ is clear.

\begin{lemma}
\label{3.1}
Let $P$ and $Q$ be $(d-1)$-dimensional simplicial posets
and let $I \subset P$ and $J \subset Q$ be order ideals with $I \cong J$.
If $\dim I=d-2$ then
$$h_k(P\cup_I Q)=h_k(P) + h_k(Q) -h_k(I) + h_{k-1}(I)$$
for $k=0,1,\dots,d$,
where $h_{-1}(I)=h_d(I)=0$.
\end{lemma}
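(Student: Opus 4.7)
The identity is a direct generating-function computation. My strategy is to reduce everything to $f$-vectors using inclusion--exclusion on the gluing, then reassemble the $h$-polynomials, carefully accounting for the fact that $\dim I = d-2$ rather than $d-1$.

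\textbf{Step 1 (inclusion--exclusion on ranks).} The relation $\sim_f$ identifies each element of $I$ with its image in $J$ and does nothing else; the isomorphism is rank-preserving. Hence for every $i$, the rank-$i$ elements of $P \cup_I Q$ are in bijection with the disjoint union of rank-$i$ elements of $P \setminus I$, of $Q \setminus J$, and of $I$, yielding
$$f_{i-1}(P \cup_I Q) = f_{i-1}(P) + f_{i-1}(Q) - f_{i-1}(I) \quad \text{for } i = 0, 1, \dots, d.$$

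\textbf{Step 2 (pass to $h$-polynomials).} Writing $h_X(t) = \sum_k h_k(X)\, t^k$, the standard $f$-to-$h$ transformation
$$h_X(t) = \sum_{i=0}^{d} f_{i-1}(X)\, t^i (1-t)^{d-i}$$
holds for any simplicial poset $X$ of dimension at most $d-1$ (with the convention $f_j(X)=0$ for $j > \dim X$). Multiplying the identity of Step~1 by $t^i(1-t)^{d-i}$ and summing over $i$ gives, by linearity,
$$h_{P \cup_I Q}(t) = h_P(t) + h_Q(t) - \sum_{i=0}^{d} f_{i-1}(I)\, t^i (1-t)^{d-i}.$$

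\textbf{Step 3 (absorb the dimension gap).} Since $\dim I = d-2$ we have $f_{d-1}(I) = 0$, so the last sum runs only up to $i = d-1$. Pulling out one factor of $(1-t)$ and recognising the remainder as the $h$-polynomial of $I$ in its native dimension $d-1$ gives
$$\sum_{i=0}^{d-1} f_{i-1}(I)\, t^i (1-t)^{d-i} = (1-t) \sum_{i=0}^{d-1} f_{i-1}(I)\, t^i (1-t)^{(d-1)-i} = (1-t)\, h_I(t).$$
Then $(1-t)\, h_I(t) = \sum_k \bigl(h_k(I) - h_{k-1}(I)\bigr) t^k$ with the boundary conventions $h_{-1}(I) = 0$ and $h_d(I) = 0$ (the latter because $\dim I = d-2$). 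Reading off the coefficient of $t^k$ in $h_P(t) + h_Q(t) - (1-t)\, h_I(t)$ produces exactly the claimed identity.

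\textbf{Main obstacle.} The argument has no serious difficulty; the only point that requires care is the bookkeeping for the two different dimensional conventions attached to $I$ (its native dimension $d-1$ versus the ambient dimension $d$ inherited from $P \cup_I Q$). The single extra factor of $(1-t)$ in Step~3 is precisely the correction for this discrepancy, and it is exactly what produces the $+h_{k-1}(I)$ term in the final formula.
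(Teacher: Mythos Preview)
Your proof is correct and follows exactly the approach indicated in the paper: the paper's proof consists of the single sentence ``The desired formula follows from $f_k(P \cup_I Q)=f_k(P)+f_k(Q)-f_k(I)$ by straightforward computations,'' and your Steps~1--3 simply supply those computations in full. The only extra content you provide is the explicit generating-function manipulation extracting the factor $(1-t)$, which is precisely the ``straightforward computation'' the paper leaves to the reader.
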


\begin{proof}
The desired formula follows from $f_k(P \cup_I Q)=f_k(P)+f_k(Q)-f_k(I)$
by straightforward computations.
\end{proof}

For a positive integer $i$,
we write $[i]=\{1,2,\dots,i\}$.
For integers $1 \leq k \leq d$,
let
$$\Delta_d(k)=
\big\{F \subset [d]: F \not \supset [k]\big\}.$$
By defining a partial order on $\Delta_d(k)$ by inclusion,
we  regard $\Delta_d(k)$ as a simplicial poset of dimension $d-2$.
In other words,
$\Delta_d(k)$ is the (abstract) simplicial complex generated by 
$\{ [d] \setminus \{i\}: i=1,2,\dots,k\}$.
Let $\ee_0,\ee_1,\dots,\ee_d \in \ZZ^{d+1}$ be the unit vectors of $\ZZ^{d+1}$.
Thus $\ee_i$ is the vector such that $(i+1)$-th entry of $\ee_i$ is $1$ and all other entries of $\ee_i$ are $0$.
It is easy to see that $h_i(\Delta_d(k))=1$ for $i < k$ and $h_i(\Delta_d(k))=0$ for $i \geq k$.
Then the next lemma follows from Lemma \ref{3.1}.

\begin{lemma}
\label{3.2}
With the same notation as in Lemma \ref{3.1},
if $I \cong \Delta_d(k)$ then
$$h(P \cup_I Q)=h(P) + h(Q) - \ee_0 + \ee_{k}.$$
\end{lemma}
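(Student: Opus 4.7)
The plan is to reduce Lemma \ref{3.2} to a direct component-wise application of Lemma \ref{3.1}, combined with the explicit $h$-vector of $\Delta_d(k)$ recorded just before the lemma. Specifically, Lemma \ref{3.1} gives, for each index $i \in \{0,1,\dots,d\}$,
$$h_i(P \cup_I Q) - h_i(P) - h_i(Q) \;=\; -h_i(I) + h_{i-1}(I),$$
so the task is simply to check that the right-hand side, evaluated with $I \cong \Delta_d(k)$, equals the $i$-th component of $-\ee_0 + \ee_k$.

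First, I would take as input the stated values $h_i(\Delta_d(k)) = 1$ for $0 \leq i < k$ and $h_i(\Delta_d(k)) = 0$ for $k \leq i \leq d-1$ (these are straightforward to verify from the $f$-vector of the complex generated by $[d]\setminus\{1\},\dots,[d]\setminus\{k\}$, and the dimension convention $\dim \Delta_d(k) = d-2$ matches the hypothesis $\dim I = d-2$ of Lemma \ref{3.1}). Then I would split the calculation of $-h_i(I) + h_{i-1}(I)$ into four ranges: at $i = 0$ it equals $-1 + 0 = -1$, producing the $-\ee_0$ contribution; for $1 \leq i \leq k-1$ it equals $-1 + 1 = 0$; at $i = k$ it equals $0 + 1 = 1$, producing the $+\ee_k$ contribution; and for $k < i \leq d$ it equals $0 + 0 = 0$. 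Assembling these component-wise gives exactly $-\ee_0 + \ee_k$.

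There is no substantive obstacle here; the lemma is essentially a bookkeeping corollary of Lemma \ref{3.1}. The only point requiring a little care is the boundary convention $h_{-1}(I) = h_d(I) = 0$ built into Lemma \ref{3.1}, which is what makes the $i = 0$ case contribute $-\ee_0$ and, in the extreme case $k = d$, correctly produces the $+\ee_d$ term from $h_{d-1}(\Delta_d(d)) = 1$ without introducing any spurious contribution at index $d$.
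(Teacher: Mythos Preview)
Your proposal is correct and follows exactly the approach indicated in the paper: the paper simply states that the lemma follows from Lemma~\ref{3.1} together with the explicit $h$-vector of $\Delta_d(k)$, and your component-wise computation of $-h_i(I)+h_{i-1}(I)$ is precisely the unpacking of that claim. Your attention to the boundary conventions $h_{-1}(I)=h_d(I)=0$ and to the edge case $k=d$ is appropriate and matches what is implicitly used.
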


To apply Lemmas \ref{3.1} and \ref{3.2} efficiently,
it is convenient to have a nice way to construct simplicial cell balls by gluing two simplicial cell balls.
Constructibility and shellability give such nice ways.

\begin{definition}
A $(d-1)$-dimensional simplicial poset $P$ is said to be \textit{constructible} if
\begin{itemize}
\item[(i)]
$P$ is a Boolean algebra of rank $d$, or
\item[(ii)]
there are $(d-1)$-dimensional constructible simplicial posets $Q$ and $Q'$ such that
$P=Q { {}_I \cup_{I'} } Q'$ for some order ideals $I \subset Q$ and $I' \subset Q'$ and that $I \cong I'$ is a $(d-2)$-dimensional constructible simplicial poset.
\end{itemize}
\end{definition}

\begin{definition}
A simplicial poset is said to be \textit{pure}
if all its facets have the same rank.
A $(d-1)$-dimensional pure simplicial poset $P$ is said to be \textit{shellable}
if there is an order $\sigma_1,\sigma_2,\dots,\sigma_r$ of the facets of $P$,
called a \textit{shelling of $P$},
such that, for $i=2,3,\dots,r$,
$\langle \sigma_1,\sigma_2,\dots,\sigma_{i-1} \rangle \cap \langle \sigma_i \rangle$
is isomorphic to $\Delta_d(k_i)$ for some $1 \leq k_i \leq d$.
\end{definition}

Note that shellable simplicial posets are constructible since $\Delta_d(k)$ is constructible.
In the rest of this paper,
a simplicial cell $d$-ball which is constructible (respectively shellable) will be called
a \textit{constructible $d$-ball} (respectively \textit{shellable $d$-ball}).

We need the following well-known properties of constructibility and shellability.
See e.g., \cite[Theorem 11.4 and Section 12]{Bj2}.

\begin{itemize}
\item Let $P$ be a simplicial cell sphere.
If $Q \subsetneq P$ is a shellable simplicial poset with $\dim Q=\dim P$,
then $Q$ is a simplicial cell ball.
\item Boolean algebras and $\Delta_d(k)$ with $k <d$ are shellable balls.
\item Let $P$ and $Q$ be constructible $d$-balls,
and let $I \subsetneq \partial P$ and $J \subsetneq \partial Q$ be order ideals with $I \cong J$.
If $I$ is a constructible $(d-1)$-ball then $P {_{I}\cup_J} Q$ is a constructible $d$-ball.
\end{itemize}

For a finite set $X$,
we write $\# X$ for the cardinality of $X$.
The next well-known lemma, which immediately follows from Lemma \ref{3.2}
and the fact that a Boolean algebra has the $h$-vector $(1,0,\dots,0)$,
is useful to compute $h$-vectors of shellable simplicial posets.

\begin{lemma}
\label{shellablehvector}
Let $P$ be a $(d-1)$-dimensional shellable simplicial poset with a shelling $\sigma_1,\sigma_2,\dots,\sigma_r$.
Then, for $i=1,2,\dots,d$, one has
$$h_i(P)=\# \big\{ \ell \in \{2,3,\dots,r\}: 
\langle \sigma_1,\sigma_2,\dots,\sigma_{\ell-1} \rangle \cap \langle \sigma_\ell \rangle
\cong \Delta_d(i)
\big\}.$$
\end{lemma}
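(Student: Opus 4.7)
The plan is to proceed by induction on $r$, the length of the shelling. For each $\ell = 1, 2, \dots, r$, I would set $P_\ell = \langle \sigma_1, \sigma_2, \dots, \sigma_\ell \rangle$, so that $P_1 \subset P_2 \subset \cdots \subset P_r = P$, and the shelling definition guarantees that $P_\ell = P_{\ell-1} \cup_{I_\ell} \langle \sigma_\ell \rangle$, where $I_\ell = \langle \sigma_1, \dots, \sigma_{\ell-1} \rangle \cap \langle \sigma_\ell \rangle \cong \Delta_d(k_\ell)$ for some $1 \leq k_\ell \leq d$.

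For the base case $\ell = 1$, the poset $P_1 = \langle \sigma_1 \rangle$ is a Boolean algebra of rank $d$, whose $f$-vector is $\binom{d}{i+1}$ and whose $h$-vector equals $\ee_0 = (1, 0, \dots, 0)$; this matches the right-hand side, which is empty. For the inductive step, Lemma \ref{3.2} applied to $P_\ell = P_{\ell-1} \cup_{I_\ell} \langle \sigma_\ell \rangle$ gives
$$h(P_\ell) = h(P_{\ell-1}) + h(\langle \sigma_\ell \rangle) - \ee_0 + \ee_{k_\ell}.$$
Since $h(\langle \sigma_\ell \rangle) = \ee_0$, the two $\ee_0$ terms cancel and one obtains simply
$$h(P_\ell) = h(P_{\ell-1}) + \ee_{k_\ell}.$$

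Iterating this identity from $\ell = 2$ to $\ell = r$ and using the base case yields
$$h(P) = \ee_0 + \sum_{\ell=2}^r \ee_{k_\ell}.$$
Reading off the $i$-th coordinate for $i \geq 1$ gives $h_i(P) = \#\{\ell \in \{2, \dots, r\} : k_\ell = i\}$, which is exactly the claimed formula since $k_\ell = i$ is equivalent to $\langle \sigma_1, \dots, \sigma_{\ell-1} \rangle \cap \langle \sigma_\ell \rangle \cong \Delta_d(i)$.

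There is no genuine obstacle here; the statement is essentially a bookkeeping consequence of Lemma \ref{3.2}. The only minor subtlety is verifying that the hypothesis of Lemma \ref{3.1} applies at each step, namely that $\dim I_\ell = d-2$. This is automatic because $\Delta_d(k_\ell)$ is a $(d-2)$-dimensional simplicial poset for every $1 \leq k_\ell \leq d$, so Lemma \ref{3.2} can be invoked at each inductive step without additional verification.
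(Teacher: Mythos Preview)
Your proof is correct and follows exactly the approach the paper indicates: the paper states that the lemma ``immediately follows from Lemma~\ref{3.2} and the fact that a Boolean algebra has the $h$-vector $(1,0,\dots,0)$,'' and your inductive argument simply makes this explicit.
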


Now we prove our first main lemma.
We say that a simplicial poset $P$ contains a simplicial poset $Q$ if there is an order ideal
$I \subset P$ which is isomorphic to $Q$.

\begin{lemma}
\label{3.A}
Fix integers $1 \leq i \leq m \leq d$ and $1 \leq j \leq d-i$.
Let $P$ be a constructible $(d-1)$-ball such that $\partial P$ contains $\Delta_d(m)$
and let $\ell = \min\{m,i+j\}$.
Then there is a constructible $(d-1)$-ball $Q$ such that
\begin{itemize}
\item[(i)] $h(Q)=h(P) + \ee_i + \ee_{j}$, and
\item[(ii)] $\partial Q$ contains $\Delta_d(\ell)$.
\end{itemize}
\end{lemma}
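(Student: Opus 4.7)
My plan is to construct $Q$ from $P$ by two successive gluings of $(d-1)$-simplexes (Boolean algebras of rank $d$). Label the vertices of the copy of $\Delta_d(m)$ inside $\partial P$ as $v_1, \ldots, v_d$, so its facets are $F_a = [d] \setminus \{v_a\}$ for $a = 1, \ldots, m$. Since $j \geq 1$ forces $i \leq d-1$, $\Delta_d(i)$ is a proper subcomplex of the boundary of any $(d-1)$-simplex and is a shellable, hence constructible, $(d-2)$-ball. Let $B_1$ be a $(d-1)$-simplex. As $i \leq m$, $\Delta_d(i)$ sits as an order ideal both inside $\Delta_d(m) \subset \partial P$ and inside $\partial B_1$, so I set $P_1 = P \cup_{\Delta_d(i)} B_1$. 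Then $P_1$ is a constructible $(d-1)$-ball, and since $h(B_1) = \ee_0$, Lemma~\ref{3.2} yields $h(P_1) = h(P) + \ee_i$.

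Denote by $F^B_{i+1}, \ldots, F^B_d$ the $d - i$ facets of $\partial B_1$ not lying in the first gluing region; in $\partial P_1$ they generate an order ideal isomorphic to $\Delta_d(d-i)$. Since $j \leq d-i$, its subideal generated by $F^B_{i+1}, \ldots, F^B_{i+j}$ is isomorphic to $\Delta_d(j)$. Let $B_2$ be another $(d-1)$-simplex and define $Q = P_1 \cup_{\Delta_d(j)} B_2$. By the same reasoning, $Q$ is a constructible $(d-1)$-ball and $h(Q) = h(P_1) + \ee_j = h(P) + \ee_i + \ee_j$, giving~(i).

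For~(ii), I will exhibit $\ell$ facets of $\partial Q$ whose generated order ideal is isomorphic to $\Delta_d(\ell)$. The vertex isomorphism defining the second gluing can be chosen so that, among the $d - j$ new facets of $B_2$ in $\partial Q$, there are $i$ facets $G_1, \ldots, G_i$ whose vertex sets coincide (under the composed vertex identifications of the two gluings) with $F_1, \ldots, F_i$ respectively. Together with $F_{i+1}, \ldots, F_\ell$ from the original $\Delta_d(m) \subset \partial P$ (which survive in $\partial Q$, since the first gluing only affects $F_1, \ldots, F_i$ and the second gluing takes place inside $B_1$, away from $\partial P$), we obtain $\ell$ facets of $\partial Q$. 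The two inequalities $\ell \leq m$ and $\ell \leq i+j$, both following from $\ell = \min\{m, i+j\}$ together with $i \leq \ell$, are what allow this construction.

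The main technical point is verifying that these $\ell$ facets actually generate an order ideal isomorphic to $\Delta_d(\ell)$, i.e., that each pair shares the expected $(d-3)$-subface and all $\ell$ share the common $(d-\ell-1)$-subface on vertices $\{v_{\ell+1}, \ldots, v_d\}$. For pairs within the same side (both from $B_2$, or both from $\partial P$), this is inherited from the simplex structure of $B_2$ or from $\Delta_d(m) \subset \partial P$. The delicate case is a mixed pair $G_a, F_b$ with $a \leq i < b \leq \ell$: the required $(d-3)$-face on vertex set $[d] \setminus \{v_a, v_b\}$ must be realized as a single element of $\partial Q$, subface of both, via the transitive chain $\partial P \leftrightarrow B_1 \leftrightarrow B_2$ induced by the gluings: as a subface of $F_a$ in $\partial P$ it is identified with a subface of $F^B_a$ in $B_1$, which is also a subface of $F^B_b$; and since $b \leq \ell \leq i+j$ places $F^B_b$ in the second gluing region, it is further identified with a subface of $G_a$ in $B_2$. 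The common $(d-\ell-1)$-subface is handled by an entirely analogous chain. The constraint $\ell \leq i+j$ is precisely what makes this transitive identification go through, and carrying out this bookkeeping is the main obstacle to overcome in the detailed verification.
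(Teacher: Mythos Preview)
Your proposal is correct and follows essentially the same approach as the paper: glue two rank-$d$ Boolean algebras $A$ and $B$ in succession, the first along the copy of $\Delta_d(i)$ sitting inside $\Delta_d(m)\subset\partial P$ and the second along the copy of $\Delta_d(j)$ in $\partial A$ spanned by the facets indexed $i+1,\dots,i+j$, and then verify via the transitive chain of identifications $\mathcal L(F)\sim\mathcal A(F)\sim\mathcal B(F)$ that the resulting boundary contains $\Delta_d(\ell)$. The only difference is expository: the paper labels every face in the three pieces explicitly and checks the identification for an arbitrary $F$ at once, whereas you phrase the same transitive-chain argument in terms of pairwise $(d-3)$-subfaces---but the content is identical.
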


\begin{proof}
By the assumption,
there is an order ideal $L \subset \partial P$ which is isomorphic to $\Delta_d(m)$.
We write
$$L=\big\{\mathcal L(F): F \subset [d],\ F \not \supset [m]\big\},$$
where the ordering is defined by $\mathcal L(F)>\mathcal L(G)$ if $F \supset G$.
Since $i \leq m$, 
$$I=\big\{\mathcal L(F): F \subset [d],\ F \not \supset [i]\big\} \cong \Delta_d(i)$$
is an order ideal of $\partial P$.
We prepare two Boolean algebras
$$A=\big\{\A(F): F \subset [d]\big\} \mbox{ and } B=\big\{\B(F): F \subset [d] \big\},$$
where the ordering is defined by inclusion on $F$.
Consider  the order ideal
$$J=\big\{ \A(F): F \subset [d],\ F \not \supset \{i+1,\dots,i+j\}\big\}
\cong \Delta_d(j).$$
Let
$$Q=
(P \cup_I A) \cup_J B
=(P \cup A \cup B) /\! \sim,$$
where $\sim$ is the equivalence relation defined by $\mathcal L(F) \sim \A(F)$ if $F \not \supset [i]$
and $\A(F) \sim \B(F)$ if $F \not \supset \{i+1,\dots,i+j\}$.
Then $Q$ is a constructible $(d-1)$-ball and has the desired $h$-vector by Lemma \ref{3.2}.

It remains to prove that $\partial Q$ contains $\Delta_d(\ell)$.
Clearly, $\partial Q$ contains an order ideal
$$\Delta
= \big\{\B(F): F \subset [d],\ F \not \supset [i] \big\}
\bigcup \big\{\mathcal L(F): F \subset [d],\ F \not \supset \{i+1,\dots,\ell\}\big\}/\!\sim.$$
We claim that $\Delta \cong \Delta_d(\ell)$.
What we must prove is that if $F \not \supset [i]$ and $F \not \supset \{i+1,\dots,\ell\}$
then $\mathcal L(F) \sim \B(F)$.
Indeed, $F \not \supset [i]$ implies $\mathcal L (F) \sim \A(F)$.
Also, since $\ell \leq i+j$,
$F \not \supset \{i+1,\dots,\ell\}$ implies $\A(F)\sim \B(F)$.
\end{proof}

We note that 
Lemma \ref{3.A} essentially appeared in the proof of \cite[Theorem 16]{Ko}.

Next, we develop a way to construct simplicial cell balls
by applying Lemma \ref{3.A} repeatedly.
Let $\hvec =(h_0,h_1,\dots,h_d) \in \ZZ_{\geq 0}^{d+1}$
be a vector such that $\sum_{k=0}^d h_k$ is even.
We write $\hvec=\ee_{i_1}+\ee_{i_2}+ \cdots + \ee_{i_a}$,
where $a=\sum_{k=0}^d h_k$ and where $i_1 \leq i_2 \leq \cdots \leq i_a$.
Consider the following decomposition of $\hvec$:
$$\hvec = (\ee_{i_1} + \ee_{i_a}) + (\ee_{i_2} + \ee_{i_{a-1}}) + \cdots + (\ee_{i_{\frac a 2}}+\ee_{i_{\frac a 2 +1}}).$$
We define the \textit{initial number $\init(\hvec)$ of $\hvec$} and the \textit{width $\width(\hvec)$ of $\hvec$} by
$$
\init(\hvec)=i_{\frac a 2}
$$
and
$$\width(\hvec)=\min \{ i_k+i_{a-k+1}: k=1,2,\dots, \mbox{${a \over 2}$}\}.$$
The following fact is straightforward.

\begin{lemma}
\label{dist}
With the same notation as above, one has
$$\width(\hvec)=
\max\{\ell : h_0+ h_1 +\cdots +h_k \leq h_d+h_{d-1} + \cdots +h_{\ell -k}
\mbox{ for all }k\}.
$$
\end{lemma}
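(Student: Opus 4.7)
The plan is to translate both sides of the claimed equality into statements about the sorted multiset $i_1 \leq i_2 \leq \cdots \leq i_a$ defined by $\hvec = \ee_{i_1} + \cdots + \ee_{i_a}$, and then to prove that $W := \width(\hvec)$ is the maximal $\ell$ satisfying the family of inequalities by checking separately that (i) $\ell = W$ works and (ii) $\ell = W + 1$ fails. The key translation I will use is $h_0 + h_1 + \cdots + h_k = \#\{j : i_j \leq k\}$ and $h_d + h_{d-1} + \cdots + h_{\ell - k} = \#\{j : i_j \geq \ell - k\}$, which turns each of the defining inequalities into a count comparison on the sorted sequence.

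For step (i), I would fix $k$, set $m = \#\{j : i_j \leq k\}$ (so $i_m \leq k$ whenever $m \geq 1$), and use that $i_m + i_{a-m+1} \geq W$ holds for every $m \in \{1, \ldots, a\}$: the pair $\{m, a-m+1\}$ is unordered, so it appears in the minimum defining $W$ regardless of whether $m \leq a/2$. This forces $i_{a-m+1} \geq W - k$, and by sortedness all of $i_{a-m+1}, \ldots, i_a$ are then $\geq W - k$, giving right-hand count $\geq m$. For step (ii), I would pick $k_0 \in \{1, \ldots, a/2\}$ attaining the minimum, so $W = i_{k_0} + i_{a - k_0 + 1}$, and set $k = i_{k_0}$: then at least $k_0$ of the $i_j$'s are $\leq k$, while at most $k_0 - 1$ of them are strictly greater than $i_{a - k_0 + 1} = W - k$, i.e., at most $k_0 - 1$ are $\geq W + 1 - k$. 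Hence the inequality must fail at this particular $k$ when $\ell = W + 1$.

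The main obstacle is only bookkeeping: fixing conventions for the boundary cases $\ell - k < 0$ and $\ell - k > d$ (both make the corresponding inequality automatic), and treating the degenerate case $m = 0$ in step (i) (where the left-hand count is zero and there is nothing to prove). Both can be handled with a single sentence, so no genuinely delicate issue arises; the bulk of the argument really is the two-line comparison above using the pairing structure of $W$.
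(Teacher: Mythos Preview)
Your proposal is correct. The paper itself offers no proof of this lemma (it simply declares the fact ``straightforward''), and your counting argument---translating both sides into $\#\{j:i_j\le k\}$ and $\#\{j:i_j\ge \ell-k\}$, then using the pairing $i_m+i_{a-m+1}\ge W$ to verify $\ell=W$ and exhibiting a violating $k$ for $\ell=W+1$---is exactly the natural way to supply the omitted details.
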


We need the following reformulation of Lemma \ref{3.A}.

\begin{lemma}
\label{kolinsrev}
Let $\hvec=(h_0,h_1,\dots,h_d) \in \ZZ_{\geq 0}^{d+1}$ be such that
$h_0=0$, $\partial h_i \geq 0$ for $i=0,1,\dots,d-1$ and $\sum_{k=0}^d h_k$ is even.
Let $P$ be a constructible $(d-1)$-ball such that $\partial P$ contains
$\Delta_d(\ell)$ with $\ell \geq \init(\hvec)$.
Then there is a constructible $(d-1)$-ball $Q$ such that
$h(Q)=h(P)+\hvec$ and $\partial Q$ contains $\Delta_d(\min\{\ell,\width(\hvec)\})$.
\end{lemma}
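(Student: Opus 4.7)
The plan is to apply Lemma \ref{3.A} iteratively, adding one pair $\ee_{p}+\ee_{q}$ to the $h$-vector at each step. Writing $\hvec=\ee_{i_1}+\ee_{i_2}+\cdots+\ee_{i_a}$ with $1\leq i_1\leq i_2\leq\cdots\leq i_a$ (the lower bound uses $h_0=0$, and $a=\sum_k h_k$ is even by assumption), set $p_k=i_k$ and $q_k=i_{a-k+1}$ for $k=1,2,\ldots,a/2$. Then $\hvec=\sum_{k=1}^{a/2}(\ee_{p_k}+\ee_{q_k})$, $\init(\hvec)=p_{a/2}$, and $\width(\hvec)=\min_k(p_k+q_k)$. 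I would process the pairs in the order $k=a/2,a/2-1,\ldots,1$, because $p_{a/2}$ is the largest of the $p_k$'s and the initial assumption only guarantees $\ell\geq p_{a/2}$.

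The arithmetic prerequisite for invoking Lemma \ref{3.A} with parameters $i=p_k$, $j=q_k$ is the inequality $p_k+q_k\leq d$. I would prove it by contradiction from $\partial h_i\geq 0$. Suppose $p_k+q_k>d$ for some $k\leq a/2$ and set $i=p_k-1$. Then $i_k,i_{k+1},\ldots,i_a$ all exceed $i$, so $h_0+h_1+\cdots+h_i\leq k-1$; on the other hand, $q_k\geq d-p_k+1=d-i$, so $i_{a-k+1},\ldots,i_a\in[d-i,d]$, giving $h_{d-i}+\cdots+h_d\geq k$. Thus $\partial h_i\leq(k-1)-k=-1$, a contradiction.

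With that in hand, the iteration runs cleanly. Set $P_{a/2}=P$ and $m_{a/2}=\ell$; inductively, given a constructible $(d-1)$-ball $P_k$ whose boundary contains $\Delta_d(m_k)$ with $m_k=\min\{\ell,p_{k+1}+q_{k+1},\ldots,p_{a/2}+q_{a/2}\}$, apply Lemma \ref{3.A} with $i=p_k$, $j=q_k$, $m=m_k$ to produce $P_{k-1}$. The hypothesis $p_k\leq m_k$ holds because $p_k\leq p_{a/2}\leq\ell$ and $p_k\leq p_{k'}\leq p_{k'}+q_{k'}$ for all $k'>k$; the bounds $1\leq p_k$ and $1\leq q_k\leq d-p_k$ come from $h_0=0$ and the previous paragraph. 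The output satisfies $h(P_{k-1})=h(P_k)+\ee_{p_k}+\ee_{q_k}$ and $\partial P_{k-1}\supseteq\Delta_d(m_{k-1})$ with $m_{k-1}=\min\{m_k,p_k+q_k\}$. After all $a/2$ steps I set $Q=P_0$, so $h(Q)=h(P)+\hvec$ and $\partial Q$ contains $\Delta_d(m_0)$ where $m_0=\min\{\ell,\width(\hvec)\}$, as required.

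The main obstacle is the counting inequality $p_k+q_k\leq d$; once that is secured, the remainder is bookkeeping in which the hypotheses of Lemma \ref{3.A} and the definitions of $\init(\hvec)$ and $\width(\hvec)$ fit together exactly as needed. Constructibility of $Q$ is inherited from $P$ along the iteration since Lemma \ref{3.A} returns a constructible ball from a constructible ball, and the hypothesis $\sum_{k=0}^d h_k$ even is used solely to guarantee that the decomposition into $a/2$ pairs exists.
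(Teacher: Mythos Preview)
Your proof is correct and follows essentially the same approach as the paper: decompose $\hvec$ into $a/2$ pairs $(\ee_{i_k},\ee_{i_{a-k+1}})$, verify $i_k+i_{a-k+1}\leq d$ from the non-negativity of $\partial\hvec$, and iterate Lemma~\ref{3.A} while tracking the $\Delta_d(\cdot)$ in the boundary. The only cosmetic difference is that you process the pairs in the order $k=a/2,\ldots,1$ whereas the paper uses $k=1,\ldots,a/2$; both orders work, with the check $i\leq m$ in Lemma~\ref{3.A} coming from the monotonicity of the $p_k$'s in your version and of the $q_k$'s in the paper's.
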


\begin{proof}
We write
$$\hvec = (\ee_{i_1} + \ee_{i_a}) + (\ee_{i_2} + \ee_{i_{a-1}}) + \cdots + (\ee_{i_{\frac a 2}}+\ee_{i_{\frac a 2 +1}}),$$
where $a=\sum_{k=0}^d h_k$ and where $i_1 \leq i_2 \leq \cdots \leq i_a$.
For $k=1,2,\dots,\frac a 2$, let
$$w_k=\min \{\ell,i_1+i_a,i_2+i_{a-1},\dots,i_k + i_{a-k+1} \}$$
and let $w_0=\ell$.
The assumption $\partial h_i \geq 0$ for $i=0,1,\dots,d-1$ implies that
\begin{eqnarray}
\label{XYZ}
i_l+i_{a-l+1} \leq d
\end{eqnarray}
for all $l=1,2,\dots, \frac a 2$.

We inductively prove that,
for $k=0,1,\dots,\frac a 2$,
there is a constructible $(d-1)$-ball $Q_k$
which satisfies the following two conditions:
\begin{itemize}
\item[(i)] $h(Q_k)=h(P)+(\ee_{i_1} + \ee_{i_a}) + (\ee_{i_2} + \ee_{i_{a-1}}) + \cdots + (\ee_{i_k}+\ee_{i_{a-k+1}})$;
\item[(ii)] $\partial Q_k$ contains $\Delta_d(w_k)$.
\end{itemize}
Since $w_{\frac a 2}=\min\{\ell,\width(\hvec)\}$,
if the above statement holds then $Q_{\frac a 2}$ is the simplicial poset with the desired properties.

For $k=0$,
$Q_0=P$ satisfies the desired conditions.
Suppose that such $Q_k$, where $0\leq k < \frac a 2$, exists.
We prove the existence of $Q_{k+1}$.
Observe
$$
i_{k+1} \leq \min\{\ell, i_{a-k}\} \leq w_k,$$
where the last inequality follows from $i_{a-k}\leq i_{a-k+1} \leq \cdots \leq i_a$.
Recall that $i_{a-k} \leq d-i_{k+1}$ by \eqref{XYZ} and that $\partial Q_k$ contains $\Delta_d(w_k)$ by the induction hypothesis.
By Lemma \ref{3.A}
(apply the case when $P=Q_k$, $i=i_{k+1}$, $m=w_k$ and $j=i_{a-k}$)
there is a constructible $(d-1)$-ball $Q_{k+1}$ such that
$h(Q_{k+1})=h(Q_k)+ \ee_{i_{k+1}} + \ee_{i_{a-k}}$ and $\partial Q_{k+1}$
contains $\Delta_d(\min\{w_k,i_{k+1}+i_{a-k}\})=\Delta_d(w_{k+1})$ as desired.
\end{proof}

By considering the special case when $P$ is a Boolean algebra,
we obtain the next corollary.

\begin{corollary}[Kolins]
\label{kolins2}
Let $\hvec=(h_0,h_1,\dots,h_d) \in \ZZ_{\geq 0}^{d+1}$ be such that
$h_0=0$, $\partial h_i \geq 0$ for $i=0,1,\dots,d-1$ and $\sum_{k=0}^d h_k$ is even.
Then there is a constructible $(d-1)$-ball $P$ such that
$h(P)=\ee_0+\hvec$ and $\partial P$ contains $\Delta_d(\width(\hvec))$.
\end{corollary}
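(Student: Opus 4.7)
The plan is to specialize Lemma \ref{kolinsrev} to the smallest possible seed ball, namely a single simplex, so that the additive conclusion of the lemma directly produces the desired $h$-vector $\ee_0 + \hvec$. This is essentially a bookkeeping argument: the heavy lifting has already been done in Lemma \ref{kolinsrev}.

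First I would take $P_0$ to be the Boolean algebra of rank $d$, viewed as the face poset of a $(d-1)$-simplex. Then $P_0$ is trivially a constructible $(d-1)$-ball by clause (i) in the definition of constructibility, and it has $h$-vector $h(P_0) = \ee_0 = (1,0,\dots,0)$. Its boundary $\partial P_0$ is the poset of all proper subsets of $[d]$, which matches the definition of $\Delta_d(d) = \{F \subset [d] : F \not\supset [d]\}$. Hence $\partial P_0 \cong \Delta_d(d)$, and in particular $\partial P_0$ contains $\Delta_d(\ell)$ for every $\ell \le d$.

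Next I would check the hypothesis $\ell \geq \init(\hvec)$ needed to apply Lemma \ref{kolinsrev}. Writing $\hvec = \ee_{i_1} + \cdots + \ee_{i_a}$ with $i_1 \le \cdots \le i_a$, every index $i_k$ lies in $\{0,1,\dots,d\}$ because $\hvec \in \ZZ_{\geq 0}^{d+1}$, so $\init(\hvec) = i_{a/2} \leq d$. Thus the choice $\ell = d$ satisfies $\ell \geq \init(\hvec)$. The remaining hypotheses on $\hvec$ in Lemma \ref{kolinsrev} ($h_0 = 0$, $\partial h_i \geq 0$, and $\sum_k h_k$ even) are exactly those supplied by the corollary.

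Applying Lemma \ref{kolinsrev} to $P_0$ and $\hvec$ with $\ell = d$ therefore yields a constructible $(d-1)$-ball $P$ with $h(P) = h(P_0) + \hvec = \ee_0 + \hvec$ and $\partial P$ containing $\Delta_d(\min\{d,\width(\hvec)\})$. To finish I only need $\width(\hvec) \leq d$, so that $\min\{d,\width(\hvec)\} = \width(\hvec)$; this is immediate from inequality (\ref{XYZ}) in the proof of Lemma \ref{kolinsrev}, which gives $i_k + i_{a-k+1} \leq d$ for every $k$ as a consequence of $\partial h_i \geq 0$. There is no real obstacle in this proof; the substantive inductive construction was carried out in Lemma \ref{kolinsrev}, and here one is merely invoking it with the simplest possible initial ball.
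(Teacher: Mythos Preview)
Your proof is correct and follows exactly the approach indicated in the paper, which simply says that the corollary follows by considering the special case of Lemma~\ref{kolinsrev} in which $P$ is a Boolean algebra. You have merely spelled out the details that the paper leaves implicit: that the boundary of the rank~$d$ Boolean algebra is $\Delta_d(d)$, that $\init(\hvec)\le d$, and that $\width(\hvec)\le d$ so the minimum in the conclusion of Lemma~\ref{kolinsrev} reduces to $\width(\hvec)$.
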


The next lemma
is our second main lemma.

\begin{lemma}
\label{3.B}
Let $n,m$ and $d$ be positive integers such that $n \leq \frac d 2$ and $d-n \leq m <d$.
Let $\hvec=(h_0,h_1,\dots,h_d) \in \ZZ^{d+1}$ be a vector such that $\sum_{k=0}^d h_k=d$,
$h_i=1$ for $0 \leq i <d-n$,
$h_i>0$ for $d-n \leq i \leq m$
and $h_i=0$ for $i>m$.
There is a constructible $(d-1)$-ball $P$ such that
\begin{itemize}
\item[(i)] $h(P)=\hvec$, and
\item[(ii)] $\partial P$ contains two ideals $I_1$ and $I_2$
that have no common facets such that $I_1 \cong \Delta_d(n)$ and $I_2 \cong \Delta_d(d-n)$.
\end{itemize}
\end{lemma}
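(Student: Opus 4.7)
The plan is to construct $P$ as a shellable $(d-1)$-ball by extending a partial shelling of the boundary $\partial \Delta^d$ of a $d$-simplex. Take the vertex set of $\partial \Delta^d$ to be $\{u, v_1, \dots, v_d\}$ and its $d+1$ facets to be $\tau_1 = \{v_1, \dots, v_d\}$ and $\tau_{j+1} = \{u, v_1, \dots, v_d\} \setminus \{v_j\}$ for $j = 1, \dots, d$. Let $B = \langle \tau_1, \tau_2, \dots, \tau_{d-n+1}\rangle$ be the partial shelling using the first $d-n+1$ facets. By Lemma~\ref{shellablehvector}, $h(B) = (1, 1, \dots, 1, 0, \dots, 0)$ with $d-n+1$ leading ones, and the facets of $\partial B$ are precisely the $(d-2)$-faces of $\partial \Delta^d$ of the form $\{u, v_1, \dots, v_d\} \setminus \{x, y\}$ where $x$ lies in the ``used'' set $\{u, v_1, \dots, v_{d-n}\}$ and $y$ lies in the ``unused'' set $\{v_{d-n+1}, \dots, v_d\}$.

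I would then extend this shelling by attaching $n-1$ additional $(d-1)$-simplexes $\sigma_1, \dots, \sigma_{n-1}$ (as distinct simplicial poset elements, possibly on vertex sets shared with earlier facets). The attachments should be chosen so that $h_{d-n} - 1$ of them have intersection type $\Delta_d(d-n)$ and $h_k$ of them have intersection type $\Delta_d(k)$ for each $k$ with $d-n < k \leq m$; by Lemma~\ref{shellablehvector} this will give $h(P) = \hvec$. An attachment of type $\Delta_d(k)$ is realized by choosing a $(d-k-1)$-face $W$ in the current boundary together with the $k$ boundary facets containing $W$, and identifying $k$ facets of a new $(d-1)$-simplex with these. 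Scheduling the $\Delta_d(d-n)$-type attachments last ensures that the final attached simplex $\sigma_{n-1}$ contributes $n$ new facets to $\partial P$ sharing a common $(d-n-1)$-face, and these form $I_1 \cong \Delta_d(n)$.

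For $I_2$, I would designate an $(n-1)$-face $V_2 \subset \partial B$ having one vertex from the used set and $n-1$ from the unused set. Exactly $d-n$ facets of $\partial B$ then contain $V_2$, and by choosing each attachment's gluing face to lie in a region of $\partial B$ disjoint from these $d-n$ facets, the $d-n$ facets containing $V_2$ will survive into $\partial P$ and together form $I_2 \cong \Delta_d(d-n)$. Since the facets of $I_1$ come from $\sigma_{n-1}$ while those of $I_2$ lie in $\partial B$, the two ideals will share no common facets. The main obstacle will be the combinatorial bookkeeping: at each attachment step the required common face must exist in the current boundary, and for $\Delta_d(k)$ attachments with $k > d-n+1$ no such face exists in $\partial B$ alone, so these must be glued along facets introduced by earlier attachments; meanwhile, the region reserved for $I_2$ must remain undisturbed throughout.
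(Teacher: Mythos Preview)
Your outline has two genuine gaps, and the second one is exactly the ``main obstacle'' you name but do not resolve.

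First, your source for $I_1$ is not always available. You write that scheduling the $\Delta_d(d-n)$-type attachments last makes the final simplex $\sigma_{n-1}$ contribute $n$ free boundary facets forming $\Delta_d(n)$. But there need not be any $\Delta_d(d-n)$-type attachments: the number of them is $h_{d-n}-1$, which is $0$ whenever $h_{d-n}=1$. Take $d=6$, $n=3$, $m=5$, $\hvec=(1,1,1,1,1,1,0)$; then the two extra attachments are of types $\Delta_6(4)$ and $\Delta_6(5)$, producing only $2$ and $1$ new boundary facets respectively, so $I_1\cong\Delta_6(3)$ cannot come from the last attached simplex alone. In this example $\partial P$ has exactly $6$ facets and one does need both $I_1$ and $I_2$ to use all of them; the required partition exists, but finding it needs a different argument than the one you gave.

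Second, you correctly observe that $\partial B$ contains no copy of $\Delta_d(k)$ for $k>d-n+1$, so high-type attachments must borrow facets created by earlier attachments. You then simply assert that this can be scheduled while leaving the $d-n$ facets of $I_2$ untouched. This is the heart of the matter and it is not proved. Each type-$k$ attachment with $k$ large consumes many boundary facets, and showing that the boundary at each step contains the required $\Delta_d(k)$ away from $I_2$ (and eventually still contains a $\Delta_d(n)$ for $I_1$) is a nontrivial inductive statement that you have not formulated, let alone verified.

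The paper avoids this bookkeeping entirely by taking a different route. It writes $P=A\cup_\Omega B$ where $A\cong\Delta_{d+1}(n)$ and $B\cong\Delta_{d+1}(d-n)$ are two fixed shellable $(d-1)$-balls sitting inside the boundary of a $d$-simplex on $[d+1]$, and $\Omega$ is a shellable $(d-2)$-ball contained in the common part $\Sigma$ of $\partial A$ and $\partial B$. The ideals $I_1$ and $I_2$ are then the ``leftover'' parts $\partial A\setminus\Sigma$ and $\partial B\setminus\Sigma$, which are automatically disjoint $\Delta_d(n)$ and $\Delta_d(d-n)$. All the work is pushed into constructing $\Omega\subset\Sigma$ with a prescribed $h$-vector so that Lemma~\ref{3.1} gives $h(P)=\hvec$; this is done explicitly in Lemma~\ref{3.5} by writing down a concrete shelling. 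The advantage is that $I_1$ and $I_2$ are present by construction from the start and never at risk of being destroyed.
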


\begin{proof}
Consider the simplicial posets
$$A = \big\{\A(F): F \subset [d+1],\ F \not \supset [n] \big\} \cong \Delta_{d+1}(n)$$
and
$$B = \big\{\B(F): F \subset [d+1],\ F \not \supset \{n+1,\dots,d\} \big\} \cong \Delta_{d+1}(d-n),$$
where the ordering is defined by inclusion on $F$.
Consider the simplicial complex
\begin{eqnarray*}
\Sigma\!\! &=&\!\! \big\langle [d+1]\setminus \{i,j\}: i\in [n],\ j\in \{n+1,\dots,d\} \big\rangle\\
&=&\!\! \big\{ F\! \subset\! [d+1]:\! \mbox{ $F \subset [d+1]\setminus \{i,j\}$ for some } i\! \in\! [n] \mbox{ and } j\! \in\! \{n\!+\!1,\dots,d\} \big\}.
\end{eqnarray*}
Let
$$I_1=\big\langle \A\big([d+1]\setminus \{i,d+1\}\big): i=1,2,\dots,n\big\rangle \cong \Delta_d(n)$$
and
$$I_2=\big\langle \B\big([d+1]\setminus \{i,d+1\}\big): i=n+1,n+2,\dots,d \big\rangle \cong \Delta_d(d-n).$$
Then
\begin{eqnarray*}
\partial A
&=&
\big\langle \A\big([d+1]\setminus\{i,j\}\big): i=1,2,\dots,n,\ j=n+1,\dots,d+1\big\rangle\\
&=&\big\{\A(F): F \in \Sigma\big\} \cup I_1,
\end{eqnarray*}
and similarly
$$\partial B=\big\{\B(F): F \in \Sigma\big\} \cup I_2.$$

Let ${\boldsymbol g}=(g_0,g_1,\dots,g_d)= \hvec-\sum_{i=0}^{n-1} \ee_i - \sum_{i=0}^{d-n-1} \ee_i$.
Observe that $A$ and $B$ are constructible $(d-1)$-balls with $h(A)=\sum_{i=0}^{n-1} \ee_i$
and $h(B)=\sum_{i=0}^{d-n-1} \ee_i$.
By Lemma \ref{3.1},
if there is a constructible $(d-2)$-ball $\Omega \subset \Sigma$ such that
$-h_i(\Omega)+h_{i-1}(\Omega)=g_i$ for all $i$,
then the simplicial poset
$$P=A \cup_\Omega B = (A \cup B) /\! \sim,$$
where $\sim$ is the equivalence relation defined by $\A(F) \sim \B(F)$ for $F \in \Omega$,
satisfies the desired conditions (i) and (ii).
Thus the next lemma completes the proof.
\end{proof}

\begin{lemma}
\label{3.5}
With the same notation as in the proof of Lemma \ref{3.B},
there is a shellable $(d-2)$-ball $\Omega \subset \Sigma$ such that
$-h_i(\Omega)+h_{i-1}(\Omega)=g_i$ for all $i$.
\end{lemma}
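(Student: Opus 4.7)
The plan is to construct $\Omega$ explicitly and exhibit a shelling realizing the required $h$-vector. Identify the facets of $\Sigma$ with cells of the $n \times (d-n)$ grid via $F_{i,j} \leftrightarrow (i, j-n)$. When one adds cell $(i,j)$ after some previously added cells, the shelling type $k$ equals $(\#\text{ prev in row }i)+(\#\text{ prev in column }j)$, provided the \emph{codim-$2$ domination} condition holds: for every previous $(i',j')$ with $i'\neq i$ and $j'\neq j$, at least one of $(i',j)$ or $(i,j')$ is also previous. Under this condition the intersection of the ideal generated by the previous cells with $\langle(i,j)\rangle$ is generated by codim-$1$ faces of $(i,j)$ and hence isomorphic to $\Delta_d(k)$. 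Moreover, no facet ever covers its codim-$1$ face obtained by deleting $d+1$, so $k\leq d-2$ throughout and $\Omega$ will be a ball rather than a sphere.

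Define $\Omega$ as follows. Include all \emph{step-$1$} cells $\{(i,j):i+j\leq d-n+1\}$. Re-coordinatize the remaining region via $(p,q)\leftrightarrow(n+1-p,\ d-n+1-j)$ so that it becomes the staircase triangle $T=\{(p,q):p,q\geq 1,\ p+q\leq n\}$. Let $\alpha_1\geq\alpha_2\geq\cdots\geq\alpha_{n-1}$ be the non-increasing rearrangement of the multiset in which each $v\in\{1,\ldots,m-d+n\}$ has multiplicity $h_{d-n+v}$ and $0$ has multiplicity $h_{d-n}-1$; include in $\Omega$ the \emph{step-$2$} cells $(p,q)\in T$ with $q\geq n-p-\alpha_p+1$. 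Because $\sum_{j=d-n}^{d-p}h_j\geq n-p+1$ (a sum of $n-p+1$ positive terms), fewer than $p$ entries of $\alpha$ exceed $n-p$, yielding $\alpha_p\leq n-p$. Because every $v\in[1,m-d+n]$ appears with positive multiplicity, consecutive distinct values of $\alpha$ differ by exactly $1$; hence $\alpha_p-\alpha_{p+1}\leq 1$, and more generally $\alpha_{p+k}\geq\alpha_p-k$ for all $k\geq 0$.

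Shell $\Omega$ in order of increasing antidiagonal $i+j$, breaking ties by increasing $i$. A step-$1$ cell on antidiagonal $t$ receives shelling type $t-2$, contributing the values $1,2,\ldots,n,n,\ldots,n$ to $h_0(\Omega),\ldots,h_{d-n-1}(\Omega)$. For a step-$2$ cell whose $(p,q)$-image satisfies $p+q=n-r$, counting previous same-row and same-column cells yields $k=(d-n-r)+A_i+B_j$, where $A_i$ and $B_j$ record previous step-$2$ contributions in row $i$ and column $j$. The hypothesis $(p,q)\in\Omega$ is equivalent to $\alpha_p\geq r+1$, and together with $\alpha_{p+k}\geq\alpha_p-k$ this forces every cell of the ``L'' formed by row $i$ and column $j$ on step-$2$ antidiagonals $0,\ldots,r-1$ to lie in $\Omega$, so $A_i+B_j=2r$ and $k=d-n+r$. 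Hence $h_{d-n+r}(\Omega)=n-\sum_{j=d-n}^{d-n+r}h_j$ for $0\leq r\leq m-d+n$ and $0$ beyond, and these together with the step-$1$ contributions realize $h_{i-1}(\Omega)-h_i(\Omega)=g_i$ for every $i$.

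The main obstacle is the codim-$2$ domination at each shelling step. For a new cell $(i,j)$ and a previous cell $(i',j')$ with $i'\neq i$ and $j'\neq j$, split into cases by the signs of $i-i'$ and $j-j'$: in each case one of $(i',j)$ or $(i,j')$ lies on an earlier antidiagonal, or on the current antidiagonal with a smaller $i$ coordinate, so it is previous provided it is in $\Omega$. Such membership is automatic when the witness lies in step $1$, and in step $2$ is deduced from $\alpha_{p+k}\geq\alpha_p-k$ applied to the $(p,q)$-image, again using $(p,q)\in\Omega$. This completes the verification that $\Omega$ is a shellable $(d-2)$-ball in $\Sigma$ with the desired $h$-vector.
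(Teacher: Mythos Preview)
Your construction is correct and yields the required shellable $(d-2)$-ball, but it follows a genuinely different route from the paper.  Both proofs identify the facets of $\Sigma$ with the cells of an $n\times(d-n)$ grid and build $\Omega$ as a ``staircase'' region: a full triangular part together with a tail whose antidiagonal slices have the prescribed sizes $\alpha_\ell=\sum_{j\geq d-n+\ell-1}h_j$.  The paper, however, chooses the tail so that on antidiagonal $p+q=d+\ell$ the cells occupy the interval $p\in[\ell,\ell+\alpha_\ell-1]$, and this choice makes $D$ \emph{downward closed}: whenever $\{p,q\}\in D$ one also has $\{p-1,q\},\{p,q-1\}\in D$.  That single property lets the paper shell in lexicographic order and read off the type of each facet as $p+q-n-2$ in one line, with no separate codim-$2$ domination argument needed.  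Your tail instead sits at the opposite end of each antidiagonal (in original coordinates, $p\in[n+1-\alpha_\ell,\,n]$), and you shell by antidiagonal; this forces you to establish the Lipschitz bound $\alpha_{p+1}\geq\alpha_p-1$ and to run the codim-$2$ domination check case by case.  Both arguments are sound, but the paper's downward-closure observation buys a noticeably shorter and more transparent proof.  Two small points in your write-up: the re-coordinatization formula ``$(p,q)\leftrightarrow(n+1-p,\ d-n+1-j)$'' has a typo (the left side should involve $i,j$), and the justification ``a sum of $n-p+1$ positive terms'' for $\alpha_p\leq n-p$ tacitly assumes $d-p\leq m$; the remaining case $d-p>m$ is trivial but should be mentioned.
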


\begin{proof}
Observe
$${\boldsymbol g}=(-1,\dots,-1,0,\dots,0,h_{d-n},h_{d-n+1},\dots,h_d),$$
where $-1$ appears in the first $n$ entries.
Let
$$\alpha_\ell=h_{(d-n)+\ell-1} + h_{(d-n)+\ell} +\cdots + h_d$$
for $\ell=1,2,\dots,n+1$.
By the assumption on $\hvec =(h_0,h_1,\dots,h_d)$,
$\alpha_1=n$ and
$\alpha_\ell \leq n -(\ell-1) $ for all $\ell$.

Let
\begin{eqnarray*}
D&=&\big\{\{p,q\}: p\in [n],\ q \in \{n+1,\dots,d\},\ p+q \leq d \big\}\\
&&\bigcup \left[\bigcup_{\ell=1}^{m-(d-n)+1} \big\{ \{p,d+\ell-p\}: p=\ell,\ell+1,\dots,\ell+\alpha_{\ell}-1 \big\} \right].
\end{eqnarray*}
Since $\alpha_\ell \leq n-(\ell-1)$,
$D \subset \{\{p,q\}: p \in [n],\ q\in\{n+1,\dots,d\} \}$.
Let
$$\Omega=\big\langle [d+1] \setminus \{p,q\}: \{p,q\} \in D \big\rangle \subset \Sigma.$$
We claim that $\Omega$ satisfies the desired conditions.

We first prove that $\Omega$ is shellable.
We define the total order $\succ$ on $D$ by
$\{p,q\} \succ \{p',q'\}$, where $p<q$ and $p'<q'$,
if $p<p'$ or $p=p'$ and $q<q'$.
We show that the total order on the facets of $\Omega$
induced by $\succ$ gives a shelling of $\Omega$.
Since $h_{n-d},\dots,h_m>0$ and $h_{m+1}= \cdots =h_d=0$ by the assumption,
we have $\alpha_1> \alpha_2> \cdots > \alpha _{m-(d-n)+2}= \cdots = \alpha _{n+1}=0$.
Then, by the construction of $D$,
for any $\{p,q\} \in D$ with $p<q$,
we have $\{p-1,q\} \in D$ if $p \ne 1$ and $\{p,q-1\} \in D$ if $q \ne n+1$.
This fact shows that, for any $\{p,q\} \in D$ with $\{p,q\} \ne \{1,n+1\}$,
\begin{eqnarray*}
&&\big\langle [d+1]\setminus \{s,t\}: \{s,t\} \in D,\ \{s,t\} \succ \{p,q\} \big\rangle
\bigcap \big\langle [d+1]\setminus \{p,q\} \big\rangle\\
&&= \big\langle [d+1] \setminus \{p,q,k\}:
k=1,2,\dots,p-1,n+1,n+2,\dots,q-1 \big \rangle\\
&& \cong \Delta_{d-1}(p+q-n-2).
\end{eqnarray*}
Hence $\Omega$ is shellable.
(To see that the second line contains the first line,
use the fact that if $F$ is contained in the first line then
one has either $p'=\min([n] \setminus F) <p$ or $q'=\min(\{n+1,\dots,d\}\setminus F) <q$
since $[d+1]\setminus \{p',q'\}$ must be contained in $\{ [d+1]\setminus \{s,t\}: \{s,t\} \in D,\ \{s,t\} \succ \{p,q\} \}$.)

The above shelling and Lemma \ref{shellablehvector} show
$$h_i(\Omega)=\# \big\{\{p,q\} \in D: p+q-n-2=i \big\}$$
for $i=0,1,\dots,d-1$.
Then a simple counting shows
\begin{eqnarray*}
h_i(\Omega)=
\left\{
\begin{array}{ll}
i+1, & \mbox{ if } i \leq n-1,\\
n, & \mbox{ if } n \leq i \leq d-n-1,\\
\alpha_{i-(d-n)+2}, & \mbox{ if }  i \geq d-n.
\end{array}
\right.
\end{eqnarray*}
Note that $h_{d-n-1}(\Omega)=\alpha_1=n$.
Then we have  $-h_i(\Omega)+h_{i-1}(\Omega)=g_i$
for all $i$.

Finally,
$\Omega$ is a simplicial cell $(d-2)$-ball
since $\Omega$ can be identified with a $(d-2)$-dimensional subposet of a $(d-2)$-dimensional
simplicial cell sphere $\partial A$.
\end{proof}

\begin{remark}
The simplicial complex $\Sigma$ is the cone of the join of the boundaries of simplexes,
and
$h$-vectors of shellable subcomplexes of $\Sigma$ having the same dimension as $\Sigma$
are characterized in \cite{BFS}.
This will give an alternative proof of Lemma \ref{3.5}.
\end{remark}

\begin{theorem}
\label{sufficiency}
If $\hvec=(h_0,h_1,\dots,h_d) \in \ZZ^{d+1}$ satisfies conditions (1)--(7) in Theorem \ref{1.2},
then $\hvec$ is the $h$-vector of a simplicial cell $(d-1)$-ball.
\end{theorem}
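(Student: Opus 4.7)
My plan is to build a constructible $(d-1)$-ball with $h$-vector $\hvec$ by choosing a \emph{template} ball and enlarging it iteratively via Lemma~\ref{kolinsrev}. Each application of that lemma adjoins an even-sum, non-negative vector to the $h$-vector while preserving a $\Delta_d(\ell)$ ideal in the boundary; in particular each enlargement preserves the parity of $\sum h_k$. Consequently the choice of template is dictated by the parity of $\sum_{k=0}^d h_k$ together with the location of any zero entries of $\partial \hvec$.

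I would first dispose of the cases in which a simplex suffices as template. When $\sum h_k$ is odd and $\partial \hvec$ is strictly positive, set $\hvec' = \hvec - \ee_0$: then $h'_0 = 0$, $\partial h'_i = \partial h_i - 1 \geq 0$, and $\sum h'_k$ is even, so Corollary~\ref{kolins2} directly produces the ball. When $\sum h_k$ is even and $\partial \hvec$ is strictly positive, the conclusion is supplied by Kolins' Theorem~16, cited in the introduction.

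The central case is $\partial h_n = 0$ for some $1 \leq n \leq d-2$. Using the symmetry $\partial h_i = \partial h_{d-1-i}$, I take $n$ to be the smallest such index, so $n \leq (d-1)/2$. Using conditions (4), (5) and (7) one reads off an index $m$ with $d - n \leq m < d$ so that a non-negative vector $\hvec_0 \leq \hvec$ of the shape prescribed by Lemma~\ref{3.B} (with $\sum \hvec_0 = d$) can be chosen. Realize the template ball $P_0$ by Lemma~\ref{3.B}, obtaining in $\partial P_0$ two disjoint ideals $I_1 \cong \Delta_d(n)$ and $I_2 \cong \Delta_d(d-n)$. Split $\hvec - \hvec_0 = \hvec_1 + \hvec_2$, where $\hvec_1$ collects the entries to be added along $I_1$ and $\hvec_2$ those along $I_2$, and apply Lemma~\ref{kolinsrev} twice---once to $P_0$ along $I_1$ to adjoin $\hvec_1$, then to the resulting ball along $I_2$ to adjoin $\hvec_2$. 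The final ball has $h$-vector $\hvec_0 + \hvec_1 + \hvec_2 = \hvec$.

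The main obstacle is to arrange the decomposition so that $\hvec_1$ and $\hvec_2$ satisfy all the hypotheses of Lemma~\ref{kolinsrev}: non-negativity, $h_0 = 0$, even sum, $\partial \hvec_j \geq 0$ component-wise, and the width conditions $\init(\hvec_1) \leq n$ and $\init(\hvec_2) \leq d-n$ so that the respective $\Delta_d(n), \Delta_d(d-n)$ ideals in $\partial P_0$ are wide enough to launch the extensions. Via Lemma~\ref{dist} these width conditions become partial-sum inequalities on $\hvec$ that match conditions (6) and (7); condition (4) supplies $\partial \hvec_j \geq 0$, while (5) controls the parity and ensures that $\sum \hvec_0 = d$ is consistent with $\sum h_k$ modulo $2$ after the two even-sum adjustments. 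A residual subcase, $\sum h_k$ odd with $\partial \hvec$ having a zero (possible only when $d$ is even by (3)), is reduced to the preceding construction by an extra gluing of one simplex along a suitable $\Delta_d(k)$ ideal via Lemma~\ref{3.2} at the end to flip the parity of the number of facets.
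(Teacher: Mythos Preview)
Your architecture matches the paper's---templates enlarged via Lemma~\ref{kolinsrev}, with Lemma~\ref{3.B} providing a two-sided template when $\partial\hvec$ vanishes somewhere---but the parity bookkeeping is wrong and leaves a real gap. The Lemma~\ref{3.B} template has exactly $d$ facets, and each application of Lemma~\ref{kolinsrev} adds an even number, so your central construction always yields $\sum h_k\equiv d\pmod 2$. When $d$ is odd and $\partial h_n=0$, condition~(3) forces $\sum h_k$ even, and your construction cannot reach that; your residual subcase only treats $d$ even. More fundamentally, conditions (5), (6), (7) all \emph{conclude} ``$\sum h_k$ is even'', so by contraposition they constrain $\hvec$ only when $\sum h_k$ is odd; invoking (5) to locate $m\ge d-n$ (equivalently $h_j>0$ for $j\le d-n$) in the even-sum case is illegitimate. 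Concretely, $d=4$, $\hvec=(1,0,2,1,0)$ satisfies (1)--(7) with $\partial h_1=0$ and $\sum h_k=4$, yet $h_1=0$, so no Lemma~\ref{3.B} template fits under $\hvec$. Your claim that ``(5) controls the parity and ensures $\sum\hvec_0=d$ is consistent with $\sum h_k$'' is simply false.

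The paper's proof inverts your case split. For $\partial h_n=0$ with $\sum h_k$ even it avoids Lemma~\ref{3.B} altogether: using only condition~(4) it shows $\width(\hvec-\ee_0-\ee_{d-n})\ge d-n$, so Corollary~\ref{kolins2} produces a ball with $h$-vector $\hvec-\ee_{d-n}$ and $\Delta_d(d-n)$ in its boundary, and one final simplex glue finishes. Lemma~\ref{3.B} is reserved for the hard case $\sum h_k$ odd (hence $d$ even), where the decomposition is $\hvec=\hvec'+\hvec''+\overline{\boldsymbol\delta}+\ee_{d-n}$: here $\overline{\boldsymbol\delta}$ is the Lemma~\ref{3.B} template, $\hvec'$ is supported on $[n+1,d-n-1]$, $\hvec''$ on $[0,n]\cup[d-n,d]$, and the extra $\ee_{d-n}$ arises not from Lemma~\ref{kolinsrev} but from gluing a \emph{separately built} ball $Q$ (with $h(Q)=\ee_0+\hvec'$, from Corollary~\ref{kolins2}) onto the template along $I_2\cong\Delta_d(d-n)$. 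That one extra unit is exactly what repairs the parity, and conditions (5), (6), (7)---now legitimately available since $\sum h_k$ is odd---establish respectively $m\ge d-n$, $\width(\hvec')\ge d-n$, and $\partial\hvec''\ge 0$.
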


\begin{proof}
If $\partial \hvec$ is positive or $\sum_{k=0}^d h_k$ is even,
then the assertion was proved in \cite[Theorem 16]{Ko}.
But we include a proof for completeness
(our proof is essentially the same as that of Kolins).
\medskip

{\em Case 1.}
Suppose that all entries of $\partial \hvec$ are positive.
If $\sum_{k=0}^d h_k$ is odd
then,
since $\partial(\hvec-\ee_0)$ is a non-negative vector,
Corollary \ref{kolins2} says that $\hvec =\ee_0+(\hvec -\ee_0)$
is the $\hvec$-vector of a simplicial cell $(d-1)$-ball.
Suppose that $\sum_{k=0}^d h_k$ is even.
We write
$$\hvec -\ee_0=(\ee_{i_1}+\ee_{i_a})+ (\ee_{i_2}+\ee_{i_{a-1}})+ \cdots + (\ee_{i_{\frac {a-1} 2}}+\ee_{i_{\frac {a+3} 2 }}) + \ee_{i_{\frac {a+1} 2 }},$$
where $a=\sum_{k=0}^d h_k -1$ and where $i_1 \leq \cdots \leq i_a$.
Let $\hvec'=\hvec -\ee_0 -\ee_{i_{\frac {a+1} 2 }}$.
Since $\partial(\hvec-\ee_0)$ is non-negative,
we have $i_{a-l+1} \leq d-i_l$ for $l=1,2,\dots,\frac {a-1} 2$.
Then $\partial \hvec'$ is non-negative and $\width (\hvec') \geq \min\{i_{\frac {a+3} 2 },\dots,i_a\} \geq i_{\frac {a+1} 2}$.
Thus,
by Corollary \ref{kolins2},
there is a constructible 
$(d-1)$-ball $P$ such that $h(P)=\ee_0+\hvec'= \hvec -\ee_{i_{\frac {a+1} 2}}$
and $\partial P$ contains $\Delta_d(i_{\frac {a+1} 2})$.
Let $Q$ be a simplicial cell $(d-1)$-ball obtained
from $P$
by gluing a Boolean algebra of rank $d$ along an order ideal $J \subset \partial P$ with $J \cong \Delta_d(i_{\frac {a+1} 2})$.
Lemma \ref{3.2} guarantees that the $h$-vector of $Q$
is $h(P)+\ee_{i_{\frac {a+1} 2}}=\hvec$.
\medskip

{\em Case 2.}
Suppose that $\partial \hvec$ has a zero entry and
$\sum_{k=0}^d h_k$ is even.
Let
$$n=\min\{k :\partial h_k=0\}$$
and let $\hvec '=(h_0',h_1',\dots,h_d')=\hvec -\ee_0-\ee_{d-n}$.
Note that $\hvec'$ is a non-negative vector since $\partial h_{n-1}>0$ and $\partial h_n=0$ imply $h_{d-n} > 0$.
By the same argument as in Case 1,
it is enough to prove that there is a constructible $(d-1)$-ball $P$
such that $h(P)=\hvec -\ee_{d-n}$ and $\partial P$ contains $\Delta_d(d-n)$.
By the choice of $n$,
$\partial \hvec'$ is non-negative.
Thus, by Corollary \ref{kolins2},
what we must prove is $\width (\hvec ') \geq d-n$.

By the condition (4),
for $k \geq 0$, we have
$$
h_{k+n}+ \cdots + h_{k+1}
\geq
(h_0+ h_1+ \cdots + h_{k+n}) - (h_d+ h_{d-1}+ \cdots + h_{d-k-n})$$
(we substitute $k$ by $k+n$ to the inequality in (4)).
Then we have $h_0+\cdots+h_{k} \leq h_d+ \cdots + h_{d-k-n}$ for all $k\geq 0$.
This fact shows
$$
h_0'+h_1'+\cdots+h_{k}' 
\leq
h_d'+ h_{d-1}'+ \cdots + h'_{d-k-n}
$$
for all $k \geq 0$.
Then Lemma \ref{dist} guarantees $\width (\hvec') \geq d-n$.
\medskip

{\em Case 3.}
Suppose that $\partial \hvec$ has a zero entry and
$\sum_{k=0}^d h_k$ is odd.
By the condition (3),
$d$ must be even.
Let
$$n=\min\{k :\partial h_k=0\}.$$
By the symmetry of $\partial \hvec$,
we have $n < \frac d 2$.
Let
$$m=\min\{k: h_k =0\}-1.$$
Thus $h_0,h_1,\dots,h_m>0$ and $h_{m+1}=0$.
Then $m \leq d-1$ since $h_d=0$
and $m \geq d-n$ by the condition (5).

By the condition (7),
$\partial h_{d-m-1} \geq d-m$.
Recall that $\partial h_n=0$, $d-m \leq n$ and $\partial h_k > 0$ for $k<n$.
Then, there is a sequence of integers
$d-m \leq s_1 <s_2 < \cdots < s_p =n$
such that
\begin{itemize}
\item $d-m > \partial h_{s_1} > \cdots > \partial h_{s_p}=0$, and
\item for any $s_{i-1} <j < s_i$ with $i \geq 2$
one has $\partial h_j \geq \partial h_{s_{i-1}}$,
and for any $d-m -1 <j < s_1$
one has $\partial h_j \geq d-m$.
\end{itemize}
(The next figure explains what $s_1,s_2,\dots,s_p$ are.)
\bigskip
\begin{center}
\unitlength 0.1in
\begin{picture}( 50.1300, 17.5600)(  7.9800,-25.5600)
%
\special{pn 13}%
\special{pa 1638 2556}%
\special{pa 1638 800}%
\special{fp}%
\special{sh 1}%
\special{pa 1638 800}%
\special{pa 1618 868}%
\special{pa 1638 854}%
\special{pa 1658 868}%
\special{pa 1638 800}%
\special{fp}%
%
\special{pn 13}%
\special{pa 1220 2306}%
\special{pa 5400 2306}%
\special{fp}%
\special{sh 1}%
\special{pa 5400 2306}%
\special{pa 5334 2286}%
\special{pa 5348 2306}%
\special{pa 5334 2326}%
\special{pa 5400 2306}%
\special{fp}%
\put(14.2800,-10.5100){\makebox(0,0){$d\!-\!m$}}%
\put(16.2800,-6.9000){\makebox(0,0){$\partial h_i$}}%
%
\special{pn 8}%
\special{pa 1632 1052}%
\special{pa 5412 1052}%
\special{dt 0.045}%
\put(14.2800,-13.0200){\makebox(0,0){$\partial h_{s_1}$}}%
\put(14.2800,-16.7800){\makebox(0,0){$\partial h_{s_2}$}}%
\put(14.2800,-20.5400){\makebox(0,0){$\partial h_{s_3}$}}%
\put(22.6400,-24.3000){\makebox(0,0){$d\!-\!m\!-\!1$}}%
\put(28.9200,-24.3000){\makebox(0,0){$s_1$}}%
\put(37.2800,-24.3000){\makebox(0,0){$s_2$}}%
\put(45.6400,-24.3000){\makebox(0,0){$s_3$}}%
\put(51.9100,-24.3000){\makebox(0,0){$s_p$}}%
\put(55.0100,-23.0000){\makebox(0,0){$i$}}%
\put(18.2000,-24.3200){\makebox(0,0){$\cdots$}}%
%
\special{pn 8}%
\special{pa 2264 926}%
\special{pa 2264 2306}%
\special{dt 0.045}%
%
\special{pn 8}%
\special{pa 2892 1302}%
\special{pa 2892 2306}%
\special{dt 0.045}%
%
\special{pn 8}%
\special{pa 3728 1678}%
\special{pa 3728 2306}%
\special{dt 0.045}%
%
\special{pn 8}%
\special{pa 4564 2054}%
\special{pa 4564 2306}%
\special{dt 0.045}%
%
\special{pn 20}%
\special{sh 1}%
\special{ar 2264 926 10 10 0  6.28318530717959E+0000}%
\special{sh 1}%
\special{ar 2474 800 10 10 0  6.28318530717959E+0000}%
\special{sh 1}%
\special{ar 2684 1052 10 10 0  6.28318530717959E+0000}%
\special{sh 1}%
\special{ar 2892 1302 10 10 0  6.28318530717959E+0000}%
\special{sh 1}%
\special{ar 3102 1302 10 10 0  6.28318530717959E+0000}%
\special{sh 1}%
\special{ar 3310 926 10 10 0  6.28318530717959E+0000}%
\special{sh 1}%
\special{ar 3520 1177 10 10 0  6.28318530717959E+0000}%
\special{sh 1}%
\special{ar 3728 1678 10 10 0  6.28318530717959E+0000}%
\special{sh 1}%
\special{ar 3938 1428 10 10 0  6.28318530717959E+0000}%
\special{sh 1}%
\special{ar 4146 1552 10 10 0  6.28318530717959E+0000}%
\special{sh 1}%
\special{ar 4356 1177 10 10 0  6.28318530717959E+0000}%
\special{sh 1}%
\special{ar 4564 2054 10 10 0  6.28318530717959E+0000}%
\special{sh 1}%
\special{ar 4774 1930 10 10 0  6.28318530717959E+0000}%
\special{sh 1}%
\special{ar 4982 2054 10 10 0  6.28318530717959E+0000}%
\special{sh 1}%
\special{ar 5192 2306 10 10 0  6.28318530717959E+0000}%
%
\special{pn 8}%
\special{pa 1632 1304}%
\special{pa 5412 1304}%
\special{dt 0.045}%
%
\special{pn 8}%
\special{pa 1632 1676}%
\special{pa 5412 1676}%
\special{dt 0.045}%
%
\special{pn 8}%
\special{pa 1632 2054}%
\special{pa 5412 2054}%
\special{dt 0.045}%
\end{picture}%
\end{center}

Let
$${\boldsymbol \gamma}=(d-m-\partial h_{s_1}) \ee_{d-s_1} + \sum_{j=2}^p (\partial h_{s_{j-1}}- \partial h_{s_j} )\ee_{d-s_j}$$
and
$${\boldsymbol \delta}=(\delta_0,\delta_1,\dots,\delta_d) = \sum_{i=0}^m \ee_i + {\boldsymbol \gamma}.$$
By the construction of ${\boldsymbol \delta}$,
$\sum_{k=0}^d \delta_k=d+1$ and $\delta_{d-n} \geq 2$.
Let
$$\overline {\boldsymbol \delta} = {\boldsymbol \delta} -\ee_{d-n}.$$
Then $\overline {\boldsymbol \delta}$ satisfies the assumption of Lemma \ref{3.B}.
Thus there is a constructible $(d-1)$-ball $P$ such that
\begin{itemize}
\item $h(P)=\overline {\boldsymbol \delta}$.
\item $\partial P$ contains two ideals $I_1$ and $I_2$
that have no common facets such that $I_1 \cong \Delta_d(n)$ and $I_2 \cong \Delta_d(d-n)$.
\end{itemize}
We define $\hvec '=(h'_0,h_1',\dots,h'_d) \in \ZZ^{d+1}$ and $\hvec''=(h''_0,h_1'',\dots,h''_d) \in \ZZ^{d+1}$
by
$$\hvec'= (0,\dots,0,h_{n+1}-1,\dots,h_{d-n-1}-1,0,\dots,0)$$
and
$$\hvec''=  (h_0-1,\dots,h_n-1,0,\dots,0,h_{d-n}-1,\dots,h_m-1,h_{m+1},\dots,h_d)
- {\boldsymbol \gamma}.$$
We claim that the following conditions hold.
\begin{itemize}
\item[(i)] $\hvec=\hvec'+\hvec''+ \overline {\boldsymbol \delta} + \ee_{d-n}$;
\item[(ii)] $\hvec'$ and $\hvec''$ are non-negative;
\item[(iii)] $\sum_{k=0}^d h_k'$ and $\sum_{k=0}^d h''_k$ are even;
\item[(iv)] $\partial \hvec'$ and $\partial \hvec''$ are non-negative and $\partial h''_n=0$;
\item[(v)] $\width(\hvec') \geq d-n$.
\end{itemize}
We first prove that the above conditions (i)--(v) prove the desired statement.

By Corollary \ref{kolins2} and conditions (ii)--(v),
there is a constructible $(d-1)$-ball $Q$ with $h(Q)=\ee_0+\hvec'$ such that
$\partial Q$ contains an order ideal $J$ which is isomorphic to $\Delta_d(d-n)$.
Then the simplicial poset
$$R=Q {{}_J \cup_{I_2}} P$$
is a constructible $(d-1)$-ball with $h(R)=\hvec'+ \overline {\boldsymbol \delta} +\ee_{d-n}$ by Lemma \ref{3.2}.
Also, $\partial R$ contains $\Delta_d(n)$ since $I_1$ and $I_2$ have no common facets.

Since $\hvec''$ is non-negative and $\sum_{k=0}^d h_k''$ is even,
$\hvec''$ can be written in the form
$$\hvec''=(\ee_{i_1}+ \ee_{i_a}) +(\ee_{i_2}+ \ee_{i_{a-1}}) + \cdots +(\ee_{i_{\frac a 2}}+ \ee_{i_{\frac a 2+1}})$$
where $a=\sum_{k=0}^d h_k''$ and $i_1 \leq \cdots \leq i_a$.
Since $\partial h''_n=0$,
we have 
$$i_1 \leq \cdots \leq i_{\frac a 2} \leq n \mbox{ and } d-n \leq i_{\frac a 2 +1} \leq \cdots \leq i_a.$$
Thus $\init(\hvec'') \leq n$.
Since $\partial R$ contains $\Delta_d(n)$,
Lemma \ref{kolinsrev} guarantees the existence of
a constructible $(d-1)$-ball whose $h$-vector is $h(R)+\hvec''=\hvec$.
\medskip

It remains to prove (i)--(v).
Statement (i) is obvious.

{\em Proof of (ii).}
Since $h_k > 0$ for $k \leq d-n$ (see the first paragraph of Case 3),
$\hvec'$ is non-negative.
To prove that $\hvec''$ is non-negative,
it is enough to prove that $\delta_k \leq h_k$ for all $k$.
For $k > m$,
$h_k \geq 0=\delta_k$.
Also, if $k \leq m$ and $k \not \in \{d-s_1,\dots,d-s_p\}$ then $h_k \geq 1 = \delta_k$
by the choice of $m$.
Suppose $k=d-s_j$.
Since $h_{s_j} - h_{d-s_j}= \partial h_{s_j} - \partial h_{s_j-1}$,
$$h_{d-s_j} = h_{s_j} +(\partial h_{s_j-1}-\partial h_{s_j}) 
\geq 1+(\partial h_{s_{j-1}} -\partial h_{s_j}) = \delta_{d-s_j},$$
where the inequality follows from the choice of $s_1,\dots,s_p$.
(If $j=1$ then we consider that $\partial h_{s_0}=d-m$.)

{\em Proof of (iii).}
Since $\partial h_n=(h_0+\cdots+h_n) - (h_d + \cdots +h_{d-n})=0$
and since $h_0+h_1+ \cdots +h_d$ is odd,
$$h_{n+1} + \cdots + h_{d-n-1}= (h_0+ \cdots +h_d) -(h_0+ \cdots+h_n)-(h_d+ \cdots + h_{d-n})$$
is odd.
Then, since $d$ is even,
$$(h_0'+ \cdots + h_d') = (h_{n+1} + \cdots + h_{d-n-1}) -(d-2n-1)$$
is even, as desired.
Similarly, $\partial h_n=0$ implies that
$$(h_0''+ \cdots + h_d'')=
(h_0+\cdots+h_n)+(h_d+ \cdots +h_{d-n})-2n-2$$
is even.

{\em Proof of (iv).}
Observe $\partial h'_k= \partial h_k - \partial h_n$ for $k \geq n$.
Since $\partial h_n=0$, we have
\begin{eqnarray*}
\partial h_k'=
\left\{
\begin{array}{lll}
0, & \mbox{ for } k=0,1,\dots,n,\\
\partial h_k, & \mbox{ for } k=n+1,\dots,\frac d 2 -1.\\
\end{array}
\right.
\end{eqnarray*}
Hence $\partial \hvec'$ is non-negative.

For $k=0,1,\dots,d-m-1$,
$$\partial h''_k =\partial h_k -(k+1) \geq 0$$
by the condition (7).
Also, for $k=n,\dots,\frac d 2 -1$,
$$\partial h''_k=\partial h_n=0.$$
Let $k \in \{d-m,d-m+1,\dots,n-1\}$.
Then, there is an $i$ such that $s_{i-1} \leq k < s_i$, where we consider that $s_0=d-m$ and $\partial h_{s_0}=d-m$.
Then
\begin{eqnarray*}
\partial h''_k&=&
\partial h_k -(d\!-\!m) + \{ (d\!-\!m\!-\! \partial h_{s_1}) + (\partial h_{s_1}\! - \!\partial h_{s_2}) + \cdots + (\partial h_{s_{i-2}}\!-\!\partial h_{s_{i-1}})\} \\
& = & \partial h_k - \partial h_{s_{i-1}}\\
&\geq& 0,
\end{eqnarray*}
where the last inequality follows from the choice of $s_1,\dots,s_p$.
Thus $\partial \hvec''$ is non-negative.

{\em Proof of (v).}
The condition (6) says that, for $0  \leq  k \leq d-2n$,
\begin{eqnarray*}
h_{k+n}+ \cdots + h_{k+1} -n 
&\geq& \partial h_{k+n}\\
&=& \partial h_n + (h_{n+1}+ \cdots + h_{n+k}) -(h_{d-n-1}+ \cdots + h_{d-n-k})
\end{eqnarray*}
(we substitute $\ell$ by $k+n$ in the inequality in (6)).
Since $\partial h_n=0$, for $n<k  \leq d-2n$, we have
$$h_{n+1} + \cdots + h_{k} +n\leq h_{d-n-1} + \cdots + h_{d-n-k}.$$
The above inequality says
\begin{eqnarray*}
h_0'+ \cdots + h_{k}' &=&h_{n+1} + \cdots + h_{k} -(k-n)\\
& \leq& h_{d-n-1} + \cdots + h_{d-n-k} -k
 = h_d' + \cdots + h_{d-n-k}'
\end{eqnarray*}
for all $n<k<d-2n$.
Also,
if $k \leq n$ or $k \geq d-2n$ then it is clear that
$$h_0'+ \cdots + h_{k}'\leq  h_d' + \cdots + h_{d-n-k}'.$$
Then Lemma \ref{dist} guarantees $\width(\hvec')\geq d-n$.
\end{proof}
\bigskip

\noindent
\textbf{Acknowledgments}:
I would like to thank Samuel Kolins for his valuable comments on my earlier work,
which inspired me to find the complete necessary and sufficient conditions,
and for pointing out a mistake in an earlier draft.
This work was supported by KAKENHI 22740018.

\end{document}